\newtheorem{theorem}{Theorem}[section]
\newtheorem{proposition}[theorem]{Proposition}
\newtheorem{lemma}[theorem]{Lemma}
\newtheorem{definition}[theorem]{Definition}
\newtheorem{corollary}[theorem]{Corollary}
\newtheorem{remark}[theorem]{Remark}
\def\hpic #1 #2 {\mbox{$\begin{array}[c]{l} \epsfig{file=#1,height=#2}
\end{array}$}}
\def\vpic #1 #2{\mbox{$\begin{array}[c]{l} \epsfig{file=#1,width=#2}
\end{array}$}}
\newcommand  {\rmn}\romannumeral
\newcommand{\5}{\vskip 5pt}
\begin{document}
\newcommand{\BA}{\vpic{b1} {0.2in}}
\newcommand{\BB}{\vpic{b2} {0.2in}}
\newcommand{\BC}{\vpic{b3} {0.2in}}
\newcommand{\BD}{\vpic{b4} {0.2in}}
\newcommand{\Y}{\vpic{Y} {0.2in}}
\title{Scale invariant transfer matrices and Hamiltionians.}
\author{Vaughan F. R. Jones}
\thanks{V.J. is supported by the NSF under Grant No. DMS-1362138
 and grant DP140100732, Symmetries of subfactors}
\maketitle
\begin{abstract}
Given a direct system of Hilbert spaces $s\mapsto \mathcal H_s$ (with isometric inclusion maps $\iota_s^t:\mathcal H_s\rightarrow 
\mathcal H_t$ for $s\leq t$) corresponding to quantum systems on scales $s$, we define notions of scale invariant and weakly scale
invariant operators. Is some cases of quantum spin chains we find conditions for transfer matrices and nearest neighbour Hamiltonians to be scale invariant or weakly so. Scale invariance forces spatial inhomogeneity of the spectral parameter. But 
weakly scale invariant transfer matrices  may be spatially homogeneous in which case the change of spectral 
parameter from one scale to another is governed by a classical dynamical system exhibiting fractal behaviour.

\end{abstract}

\section{Introduction}
According to dogma, critical phenomena in physics are accompanied by scale invariance-patterns repeat on all scales-
and attendant long-range interactions. In this paper we explore states and observables  of a quantum spin chain that 
exhibit very strict forms of scale invariance, without passing to a continuum limit. The underlying philosophy is that elements of the Thompson groups (\cite{CFP}) 
express local scale transformations on a lattice, which must be supposed infinite for the transformation to 
exist mathematically.  The Thompson group is a replacement for the diffeomorphism group Diff$(S^1)$ (Virasoro algebra) whose
presence in the continuum limit is a consequence of local scale invariance at criticality of a 2-dimensional system.
Thus one the details of the Thompson group representations occurring in various models should
supply both qualitative and quantitative information about physics on the lattice at a critical point. By \cite{Ar} we do
not expect critical behaviour at non-zero temperature so the best place to look for Thompson group symmetry 
is at a \emph{quantum phase transition} where changing some physical parameter besides temperature causes 
an abrupt change of behaviour.

Indeed we will observe three distinct types of behaviour manifested in the asymptotics of the correlation of states with
themselves under lattice rotation/translation by one lattice spacing, as the lattice tends to infinity. This correlation can be 
as simple as an alternation between two values, but most often it  tends rapidly to zero. In this case it is possible to 
rescale the correlation so that it has limits which exist as sesquilinear forms on the pre-Hilbert space of states. 
In the model we investigate there are two sesquilinear forms $S_1$ and $S_2$ and the rescaled correlation tends
to one or the other according to parity. But as the quantum parameter in the model increases, $S_1$ and $S_2$ 
coalesce at a certain critical value after which the convergence is to the common sesquilinear. 

Our approach is wide open to criticism. The states of our "infinite tensor product" (\cite{vNdirect}) have a {\em built in}
long range correlation which forces the impossibility of a continuum limit. This has already been observed by others
and Evenbly and Vidal in \cite{EV} proposed their MERA precisely to overcome this problem. But here we are no longer
trying to construct a continuum limit. 

A perhaps more serious criticism is that the model we present uses exclusively spin-doubling renormalisation which
does not appear particularly physical. To obtain model independent results we should investigate many different models
to see if there are phenomena common to them all. Or introduce bigger groups than the Thompson groups which 
allow more general local scale invariance.

  There have been interesting mathematical developments coming out of this progam-see \cite{jo4}, but the physical relevance of our approach will ultimately be decided by the existence or otherwise of
states with scaling properties in actual physical systems. The spin-doubling operators are no more 
complicated than some of the "gates" in the world of quantum computing (\cite{NC}) so one could in principle
prepare scale invariant states with a machine. But the number of gates required would be rather large. 

It is interesting that the calculation of correlation asymptotics becomes the iteration of a purely classical
dynamical system that may be as simple as a rational function on $\mathbb C P ^1$.  Fixed points, periodic points
and their stability properties thus become the critical values for the quantum system.

Let us give a more precise account of our results. In \cite{jonogo} we proposed the construction of a Hilbert space for the continuum limit of a quantum spin
chain by reversing the process of block spin renormalisation.  We thus obtained Hilbert spaces $\mathcal H_n$ for
a chain of $2^n$ spins with each $\mathcal H_n$ embedded isometrically in $\mathcal H_{n+1}$ by replacing
each spin by 2 copies of itself. The spin doubling isometry is symbolically denoted \Y so that the isometry 
$\mathcal H_{n}\hookrightarrow \mathcal H_{n+1}$ is represented by \Y\Y\Y$\cdots$\Y. We called the inductive limit pre-Hilbert space
$\mathcal H=\underset{\rightarrow} {lim} \mathcal H_n$ the \emph{semicontinuous} limit. The Thompson groups
$F$ (for an open spin chain)  and $T$ (for periodic boundary conditions) act on $\mathcal H$ by unitary transformations
which implement \emph{local scale transformations}.
  
  The result of \cite{jonogo} showed that the continuum limit does not naturally live on the semicontinuous limit or its completion.
  We calculated that for an $SO(3)$ invariant spin $1$ chain rotations by $1\over 2^n$ are hopelessly discontinuous as 
  $n\rightarrow \infty$ for the topology induced on rotations by the circle. (The advantage of this particular spin chain is
  that \Y is unique up to an irrelevant scalar, though in unpublished calculations we have shown the same discontinuity for
  a family of spin-tripling systems where the spin-tripling operator is not at all unique.)

  In the algebraic Bethe ansatz or quantum inverse scaterring method (QISM, \cite{IK,Fad}) one starts with a transfer matrix $T(\lambda)$ depending
 on a spectral parameter $\lambda$ then obtains a nearest neighbour Hamiltonian as the logarithmic derivative of 
 $T(\lambda)$ at some value of $\lambda$ , and other Hamiltonians, and conserved quantities, by further manipulation
 of   $T(\lambda)$.
  
  The calculation of \cite{jonogo} did display a feature common, albeit in a topsey-turvey way, to the semicontinuous limit and the QISM. Namely \emph{the infinitesimal behaviour of space translation is determined  by a transfer matrix with spectral parameter}. (In \cite{jonogo} this was only shown for rotations by $1\over 2^n$ but
  for general rotations there is a more complicated way to manipulate the transfer matrix.) In the QISM the infinitesimal time translation (given by a local Hamiltonian)
  is also given by a transfer matrix with spectral parameter.  These considerations have led us
  to treat the transfer matrix itself as being of fundamental physical significance, being the generator of space translation on the one
  hand and of many Hamiltonians
  and constants of the motion on the other. It thus becomes attractive to look for transfer matrices that are defined on the semicontinuous
  limit. For this we will introduce two notions of scale invariant operators  on $\mathcal H$, the first kind being operators
  on the $\mathcal H_n$ which commute with the inclusion maps $\iota_n^{n+1}$ and the second kind, the weakly scale 
  invariant operators, which commute with the $\iota_n^{n+1}$ in the sense of their  matrix coefficients.
  It was the weakly scale invariant operators that arose in the calculation of \cite{jonogo}. In a simple model coming from 
  the Temperley-Lieb algebra \cite{TL}, we show that scale invariant transfer matrices
  exist in both senses, and that scale invariant nearest neighbour  Hamiltonians exist in the sense of their matrix coefficients.
  
  We should perhaps end by saying that this work began as an attempt to construct chiral conformal field theory on 
  a circle directly from a subfactor (\cite{jo1}), and thus hopefully extending the correspondence begun in \cite{Wa7} to include
  "exotic" subfactors as in \cite{AH}, \cite{EG11},\cite{JMS}.  This has not worked but the intriguing question arises from this paper
  as to whether these exotic subfactors have attendant solutions of our ABC equation of this paper.
  
 \section{Scale invariant transfer matrices.}
 \subsection{Definition}
 Suppose we are given a directed set $(\mathfrak D, \leq)$ (thought of as defining various scales of quantum systems)
 and a direct system $A$ of Hilbert spaces $s\mapsto \mathcal H_s=A(s)$ for $s\in \mathfrak D$ with corresponding isometric
 inclusions $\iota_s^t:\mathcal H_s \rightarrow \mathcal H_t $ for $s\leq t$ satisfying the usual direct system conditions -\cite{jonogo}.
 We let $\mathcal H=\mathcal H_A$ be the direct limit of the $A(s)$. By definition $\mathcal H$ is the set of ordered pairs
 $(d, \xi)$ with $\xi\in \mathcal H_d$, modulo the equivalence relation $(d,\xi)\sim(e,\eta)$ iff there is an $f$ with $f\geq e$ and
 $f\geq d$ with $\iota_d^f(\xi)=\iota_e^f(\eta)$. Since the $\iota$'s are linear isometries, $\mathcal H$ inherits the structure of
 a pre-Hilbert space in the obvious way. Each $\mathcal H_s$ will be identifed with a subspace of $\mathcal H$.
 
 \begin{definition}\label{scaleinvariancedef} 
 With notation as above,
 
 (1) a \emph{scale invariant operator} on $A$ will be a family 
 $T_s:\mathcal H_s\rightarrow \mathcal H_s$ of linear maps such that $$ T_t \circ \iota_s^t=\iota_s^t \circ T_s$$
 
 (2)   a \emph{weakly scale invariant operator} on $A$ will be a family 
 $T_s:\mathcal H_s\rightarrow \mathcal H_s$ of linear maps such that
  $$\langle T_t ( \iota_s^t \xi), \iota_s^t \eta \rangle=\langle  T_s(\xi),\eta \rangle\mbox{   for all   } \xi,\eta \in \mathcal H_s.$$
 \end{definition}
 \begin{remark} Scale invariance implies weak scale invariance but not the other way round  since the weak condition does not force
 $T_t$ to preserve $\mathcal H_s$. 
 \end{remark}
 \begin{proposition} \label{determines}(1)A scale invariant operator $T_s$ determines, and is determined by, an operator $T:\mathcal H\rightarrow \mathcal H$ satisfying $T|_{\mathcal H_s}=T_s$ for all $s$. 
 
 (2) A weakly scale invariant operator determines a sesquilinear (``quadratic'' in the sense of \cite{simon}) form $[,]$on $\mathcal H$ by
 $$ [\xi,\eta]=\langle T_s (\xi), \eta\rangle \mbox{  for any $s$ with  } \xi,\eta \in \mathcal H_s$$
 
 \end{proposition}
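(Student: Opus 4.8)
The plan is to construct the operator $T$ (resp.\ the form $[\,,]$) on representatives of elements of the direct limit, then check that the construction descends to equivalence classes and respects (sesqui)linearity. Throughout, the only genuine issue will be independence of the chosen scale, and in both parts this is dispatched using directedness of $\mathfrak D$ together with the (weak) compatibility identity in Definition \ref{scaleinvariancedef}.

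For part (1), given $\xi\in\mathcal H$ represented by a pair $(d,\xi_d)$ with $\xi_d\in\mathcal H_d$, I would set $T\xi$ to be the class of $(d,T_d\xi_d)$. To see this is well defined, note that if $(d,\xi_d)\sim(e,\xi_e)$ there is $f\geq d,e$ with $\iota_d^f\xi_d=\iota_e^f\xi_e$, whence by scale invariance $\iota_d^f(T_d\xi_d)=T_f(\iota_d^f\xi_d)=T_f(\iota_e^f\xi_e)=\iota_e^f(T_e\xi_e)$, so $(d,T_d\xi_d)\sim(e,T_e\xi_e)$. Linearity of $T$ follows by bringing any two given elements to a common scale $f$ (possible since $\mathfrak D$ is directed) and invoking linearity of $T_f$. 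The identity $T|_{\mathcal H_s}=T_s$ is then immediate once $\mathcal H_s$ is identified with its image in $\mathcal H$. Conversely, if $T:\mathcal H\to\mathcal H$ preserves each $\mathcal H_s$, put $T_s:=T|_{\mathcal H_s}$; writing $\iota_s^t$ as the inclusion $\mathcal H_s\subseteq\mathcal H_t$ inside $\mathcal H$ gives $T_t(\iota_s^t\xi)=T\xi=T_s\xi=\iota_s^t(T_s\xi)$, which is exactly the scale invariance condition. The two assignments are visibly mutually inverse.

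For part (2), I would define $[\xi,\eta]:=\langle T_s\xi,\eta\rangle$ after choosing $s$ with $\xi,\eta\in\mathcal H_s$ (such $s$ exists since any two elements of $\mathcal H$ live at a common scale). Independence of the choice of $s$ is the one point requiring care: if $\xi,\eta$ also lie in $\mathcal H_t$, pick $u\geq s,t$ realizing the equalities of representatives, so that $\iota_s^u\xi=\iota_t^u\xi$ and $\iota_s^u\eta=\iota_t^u\eta$ in $\mathcal H_u$, and apply weak scale invariance twice: $\langle T_s\xi,\eta\rangle=\langle T_u(\iota_s^u\xi),\iota_s^u\eta\rangle=\langle T_u(\iota_t^u\xi),\iota_t^u\eta\rangle=\langle T_t\xi,\eta\rangle$. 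Sesquilinearity of $[\,,]$ then follows by moving the finitely many vectors involved to a common scale $u$ and using sesquilinearity of $\langle\,,\rangle$ on $\mathcal H_u$ together with linearity of $T_u$.

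The argument is entirely formal and I do not expect a real obstacle; the "hard part" is purely bookkeeping — keeping the identifications $\mathcal H_s\hookrightarrow\mathcal H$ consistent, and noting that in part (1) one does not need to impose that $T_t$ preserve $\mathcal H_s$ (this is automatic from scale invariance, since $T_t(\mathcal H_s)=T_t\iota_s^t(\mathcal H_s)=\iota_s^t T_s(\mathcal H_s)\subseteq\mathcal H_s$), whereas in part (2) one deliberately drops that requirement, which is precisely why only a form, and not an operator, survives.
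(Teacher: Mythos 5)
Your argument is correct and is exactly the routine verification the paper compresses into ``these follow immediately from the definition'': construct $T$ (resp.\ $[\,,]$) on representatives, use directedness of $\mathfrak D$ plus the (weak) intertwining identity to check independence of the chosen scale, and read off (sesqui)linearity at a common scale. No substantive difference from the paper's (unwritten) proof, and your closing observation that scale invariance automatically forces $T_t$ to preserve $\mathcal H_s$ while the weak condition does not is consistent with the paper's own remark following Definition \ref{scaleinvariancedef}.
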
 
 \begin{proof} These follow immediately from the definition.
 \end{proof}
 \begin{remark} If $n\mapsto s(n)$ is a cofinal sequence in $\mathfrak D$ then we may choose an orthonormal basis
 $\xi_i$ of $\mathcal H$ with $\xi_1,\xi_2,\cdots, \xi_{dim \mathcal H_{s(n)}}$ being a basis of $\mathcal H_{s(n)}$ for
 all $n$. The square matrix $[\xi_i,\xi_j]$ for $1\leq i,j \leq dim \mathcal H_{s(n)}$ is the matrix of $T_{s(n)}$ for this
 basis. 
 
 The form $[,]$ may not define an operator on $\mathcal H$  since there is no a priori control of the size of the
 matrix entries.
 \end{remark}
 
 \subsection{Examples from spin chains.}
 
We will adopt the ``direct limit over trees'' approach of \cite{jonogo} to the semicontinuous limit. This allows us to double
a single spin at a time. Thus we consider the directed set $\mathfrak T$ of planar binary rooted trees
 with $s\leq t$ iff $s$ is a rooted subtree of $t$.

\begin{definition} For each $n$ let   $\mathcal T_n$ be the tree with $2^n$ leaves all at the same distance
from the root. 
\end{definition}

\begin{remark}The $\mathcal T_n$ form a cofinal sequence in $\mathfrak T$.
\end{remark}
If $\mathcal h$ is a (usually finite dimensional) Hilbert space of spin states for a single spin we begin with a ``spin-doubling" operator
$Y:\mathcal h \rightarrow \mathcal h\otimes \mathcal h$. We suppose it is an isometry, i.e. $Y^*Y=id$.

 In Penrose tensor 
notation this condition can be drawn as \vpic {YstarY} {0.2in} = \hpic {oneline} {0.5in} where
 $\Y$ stands for the tensor $Y$ and when it is upside down it represents its adjoint. 
Such a diagram is read from bottom to top, starting with a vector $\xi\in \mathcal h$ which is sent by $Y$ to $\mathcal h\otimes \mathcal h$ then back to $\mathcal h$ by $Y^*$. This kind of notation is very common,see e.g. \cite{CV,Pen}, and is known as ``tensor networks'', and we will generalise it to planar algebras later on.

Now form the direct system $A_Y$ on $\mathfrak T$ where if $t$ has $k$ leaves, $A_Y(t)=\otimes^k \mathcal h$. To define 
the maps $\iota_s^t$ note that any $\leq$ in $\mathfrak T$ decomposes into a sequence of $\leq$'s where one leaf is added a time. So it 
suffices to define $\iota_i :\mathcal H_s\rightarrow \mathcal H_t$ where $t$ is obtained from $s$ by doubling the $ith$ leaf
(from the left). We set
 $$ \iota_i(\eta_1\otimes\eta_2\otimes\eta_3\cdots\otimes\eta_i\otimes\cdots\eta_k)=\eta_1\otimes\eta_2\otimes\eta_3\cdots \otimes Y(\eta_i)\otimes\cdots \otimes\eta_k$$
 or, in tensor network language:
  $\iota_i=\hpic {iota} {0.3in} $.\\
  We leave it to the reader to check that these elementary $\iota$'s consistently define a direct system.
  
  Note that the $\mathcal H_{\mathcal T_n}$  are the Hilbert spaces of quantum spin chains with $2^n$ spins
  each having Hilbert space $\mathcal h$. They are embedded one into the next by doubling all the spins with $Y$.
  
  The concept of \emph{transfer matrix} for a spin chain is well known: if we are given a tensor $L$ in 
  $\otimes^4 \mathcal h$ we denote it by \vpic {Rlambda} {0.7in} .
  \begin{remark}\label{order} The placement of the $L$ indicates that
   the indices for the tensor $L$ should begin on the string immediately following $L$ in clockwise order, and continue
   in clockwise order.
   \end{remark}
   
    A transfer matrix is then an operator of the form
   $$T(L_1,L_2,\cdots,L_k)= \vpic {transfermatrix} {2.4in}$$  for some choices $L_i$. 
  We need to do something
  about the horizontal boundary to make this picture  an operator from $\otimes^k \mathcal h$ to itself. Let us assume periodic boundary 
  conditions, i.e. we identify the first and last horizontal edges in the picture which may then be thought of as living in an annulus.
  We want to find when $T(L_1,L_2,\cdots,L_k)$ defines a scale invariant operator. For physics we only need to define it on
  the $\mathcal H_{\mathcal T_n}$ and we  could then deduce its values on all the $\mathcal H_s$ by restriction.  But it will
  be just as easy define transfer matrices on each $\mathcal H_t$.
  
     To proceed we introduce an equation 
  which we call the ABC equation, which allows us to extend transfer matrices when a single spin is doubled.

 \begin{definition}
  The ABC equation is the following equation in $\otimes^5\mathcal h$.
  \begin{equation} \label{ABC} 
  \mathscr{D}(A,B)=\mathscr{Y}(C)
  \end{equation}
  
   where $A, B$ and $C$ are unknown tensors in $\otimes^4 \mathcal h$  and \\
    \5
  \5
  $\mathscr{D}(A,B)=$  \vpic {ABC2} {1in}  $\mathscr{Y}(C)=$  \vpic {ABC1} {1in} .
 \end{definition}

    \begin{proposition} Suppose $(A_i,B_i,C_i)$ satisfy the ABC equation for $i=1,2,\cdots, {k}$. Then if 
 $T=T(A_1,B_1,A_2,B_2,\cdots, A_{k},B_{{k}})$, $$T\restriction_{\otimes^k \mathcal h}=T(C_1,C_2,\cdots C_{k})$$
 
 \end{proposition}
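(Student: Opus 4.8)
The plan is to read ``$T\restriction_{\otimes^k\mathcal h}$'' as the compression of $T$ to the subspace $\iota(\otimes^k\mathcal h)$, where $\iota:\otimes^k\mathcal h\rightarrow\otimes^{2k}\mathcal h$ is the inclusion that doubles every one of the $k$ spins, i.e. $\iota=Y\otimes Y\otimes\cdots\otimes Y$ ($k$ copies), the $i$th copy of $Y$ landing in the $(2i-1)$st and $2i$th tensor factors. (That this is the relevant $\iota$ is just the observation, already in the text, that a total doubling is a composition of single-leaf doublings.) Since $\iota^*\iota=\mathrm{id}$, once we know that $T$ maps $\iota(\otimes^k\mathcal h)$ into itself the restriction equals $\iota^*\circ T\circ\iota$, so the statement reduces to computing this operator on $\otimes^k\mathcal h$ and recognising it as $T(C_1,\dots,C_k)$. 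I would carry this out entirely diagrammatically.

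First I would draw the tensor network for $\iota^*\circ T(A_1,B_1,\dots,A_k,B_k)\circ\iota$ in the annulus. Reading from the bottom: the $k$ strings of $\otimes^k\mathcal h$ first meet the $k$ copies of $Y$ coming from $\iota$, the $i$th of which feeds the two lower legs of $A_i$ and $B_i$; then come the $2k$ tensors $A_1,B_1,\dots,A_k,B_k$ joined in a cycle along their horizontal legs (periodic boundary conditions, with Remark~\ref{order} fixing the index order); finally the $k$ copies of $Y^*$ from $\iota^*$, the $i$th of which caps the two upper legs of $A_i$ and $B_i$ and returns a single string. This picture visibly factorises, around the annulus, into $k$ consecutive ``blocks'': block $i$ consists of the $i$th $Y$, the pair $A_i,B_i$ together with the horizontal string joining them, and the $i$th $Y^*$. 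Block $i$ has exactly four free legs -- the outgoing horizontal leg of $B_i$, the incoming horizontal leg of $A_i$, the lower leg of the $Y$, and the upper leg of the $Y^*$ -- so it is a tensor in $\otimes^4\mathcal h$, and consecutive blocks are glued along their horizontal legs exactly as the $L$'s are in the definition of a transfer matrix. Hence it suffices to show that block $i$ equals $C_i$ as a tensor in $\otimes^4\mathcal h$.

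Then I would invoke the ABC equation. By construction block $i$ is $\mathscr{D}(A_i,B_i)$ with its two free vertical legs (the ones not already capped inside $\mathscr{D}$) closed off by $Y^*$; equation~\eqref{ABC} lets us replace $\mathscr{D}(A_i,B_i)$ by $\mathscr{Y}(C_i)$, which is $C_i$ with a single $Y$ attached to the corresponding vertical leg, and the isometry relation $Y^*Y=\mathrm{id}$ then collapses the $Y^*Y$ pair, leaving exactly $C_i$. Reassembling the annulus, $\iota^*\circ T\circ\iota$ is the cyclic composition of $C_1,\dots,C_k$ along their horizontal legs, which is by definition $T(C_1,\dots,C_k)$. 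Moreover the full (five-legged) form of~\eqref{ABC} -- as opposed to the four-legged identity just used -- shows that, before capping, the upper legs of each pair $A_i,B_i$ already lie in the range of $Y$; contracting the remaining horizontal legs does not change this, so $T\circ\iota$ takes values in $\iota(\otimes^k\mathcal h)$ and the restriction is genuine, as required.

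The only real work is the diagrammatic bookkeeping of the middle step: one must check that compressing by $\iota$ really does split the global annular diagram into precisely these $k$ four-legged blocks -- in particular that the horizontal strings close up correctly and that each $Y$/$Y^*$ pair sits adjacent to the matching $A_i,B_i$, all consistently with the clockwise index convention of Remark~\ref{order} -- and that $\mathscr{D}(A,B)$ capped by $Y^*$ is indeed what I have called ``block $i$''. Once the pictures are lined up correctly the algebra is nothing more than $Y^*Y=\mathrm{id}$, applied $k$ times. (One could instead argue by induction, doubling a single spin at a time and using that the elementary inclusions $\iota_i$ form a consistent direct system, but the direct block decomposition seems cleanest and avoids the re-indexing.)
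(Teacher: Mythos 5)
Your proof is correct and follows essentially the same route as the paper's one-line argument: the whole content is that the embedding $\iota$ is a product of $Y$'s, so substituting $\mathscr{Y}(C_i)$ for $\mathscr{D}(A_i,B_i)$ at each trivalent vertex of the diagram for $T\circ\iota$ turns it into the diagram for $\iota\circ T(C_1,\dots,C_k)$. The detour through the compression $\iota^*\circ T\circ\iota$ is harmless but redundant, since your closing observation --- that the five-legged form of the ABC equation already shows $T\circ\iota$ lands in the range of $\iota$ --- is by itself the entire proof.
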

 
 \begin{proof} Here $\otimes^k \mathcal H$ is embedded in $\otimes^{k+1}\mathcal H$ by applying
 $Y$ to each tensor product component. So the proof is simply a matter of applying the ABC equation at every trivalent vertex in the diagram for this embedding.
 \end{proof}
 
Given $C$, the ABC equation may or may not have solutions for $A$ and $B$ and if it does it may have many. So in order
to define a scale invariant transfer matrix we need to \emph{choose} solutions if possible. Given such a choice it is easy to define
$T_s$ for any $s\in \mathfrak T$ provided we set up a little notation.

\begin{definition}\label{codetree} For each tree $t\in \mathfrak T$ and  leaf $\mathfrak l$ of $t$, let  $w(\mathfrak l)$ be the word on $\{0,1\}$ read 
from the path on $t$ up from root to leaf, with $0$ on left turns at trivalent vertices and $1$ at right turns. 

\end{definition}
\begin{definition} \label{transfermatrix} 
For any function $w\mapsto L_w\in \otimes^4\mathcal h$ from all words $w$ on $\{0,1\}$ and $t\in \mathfrak T$ with $k$ leaves, define $T^L_t$ to be the transfer matrix (on $\mathcal H_t$) with periodic boundary conditions:

$$T_t^L =\vpic {transfermatrixmu} {1.9in} $$ where $\mu_i=L_{w_{i}}$, where the leaves of $t$ are numbered 
 from left to right and $w_i$ is the word coding for the $ith$. leaf.

In the special case $t=\mathcal T_n$ where all the branches of $t$ have the same length (so $t$ has $2^k$ leaves for some $k$), and $L_w$ 
takes the same value $L$, we use $T_L$ for $T^L_t$. Thus \\
for $k=3$, $T_L=$ \vpic {transfermatrixlambda} {3in}  .

\end{definition}
\begin{definition} A \emph{coherent choice} $L_w$ of tensors, for all words $w$ on $\{0,1\}$ will be one
such that for each $w$ , $$ \mathscr{D}(L_{w0},L_{w1})=\mathscr{Y}(L_w)$$
i.e. putting $A=L_{Lw0},B=L_{w1}, \mbox{  and  } C=L_w$
gives a solution of the ABC equation.
\end{definition}

\begin{proposition}\label{scaleinvariance}Suppose $w\mapsto L_w$ is a coherent choice of tensors,
then $t\mapsto T_t^L$ defines a scale invariant operator on the direct system $A_Y$.
\end{proposition}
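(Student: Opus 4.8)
The plan is to reduce the scale‑invariance identity $T_t^L\circ\iota_s^t=\iota_s^t\circ T_s^L$ to the case of an elementary one‑leaf doubling, and then read it off from the ABC equation exactly as in the Proposition preceding Definition \ref{codetree}. The first observation is that, for a fixed family $\{T_s\}$, the collection of inclusion maps $\iota$ satisfying $T_t\circ\iota=\iota\circ T_s$ is closed under composition: if $\iota_s^u$ and $\iota_u^t$ both intertwine, so does $\iota_s^t=\iota_u^t\circ\iota_s^u$. Since every relation $s\leq t$ in $\mathfrak T$ factors as a finite chain of one‑leaf doublings (as used in setting up the direct system $A_Y$), it suffices to prove $T_t^L\circ\iota_i=\iota_i\circ T_s^L$ when $t$ is obtained from $s$ by doubling the $i$th leaf of $s$, numbered from the left.

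Next I would match up the data on the two trees. If the leaves of $s$ have codewords $w_1,\dots,w_k$ in the sense of Definition \ref{codetree}, so that $T_s^L$ is the periodic transfer matrix with cells $\mu_j=L_{w_j}$, then the leaves of $t$ have codewords $w_1,\dots,w_{i-1},\,w_i0,\,w_i1,\,w_{i+1},\dots,w_k$, at positions $i$ and $i+1$ for the two new leaves, so $T_t^L$ is the periodic transfer matrix with the same cells as $T_s^L$ except that the single cell $L_{w_i}$ is replaced by the two consecutive cells $L_{w_i0},\,L_{w_i1}$. The isometry $\iota_i$ inserts the trivalent vertex $Y$ into the $i$th tensor slot and is the identity elsewhere.

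The heart of the argument is then purely local. In the tensor‑network diagram for $\iota_i\circ T_s^L$ the inserted vertex $Y$ meets the $i$th cell $L_{w_i}$, and this local configuration is precisely $\mathscr Y(L_{w_i})$, with strands attached in the clockwise order dictated by Remark \ref{order}. By the coherence hypothesis $\mathscr D(L_{w_i0},L_{w_i1})=\mathscr Y(L_{w_i})$, so this piece may be replaced by the $\mathscr D$‑picture, which is exactly the two consecutive cells $L_{w_i0},\,L_{w_i1}$ appearing in $T_t^L$ at positions $i$ and $i+1$. All other cells, and the periodic closure of the horizontal strand, are untouched, so the resulting diagram is the one defining $T_t^L\circ\iota_i$. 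This is the same computation as in the Proposition before Definition \ref{codetree}, now carried out at the one trivalent vertex produced by $\iota_i$ rather than at all of them simultaneously. Combining with the reduction of the first paragraph gives $T_t^L\circ\iota_s^t=\iota_s^t\circ T_s^L$ for all $s\leq t$, i.e.\ $t\mapsto T_t^L$ is a scale invariant operator on $A_Y$.

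I expect the only real obstacle to be bookkeeping rather than anything conceptual: verifying that the local piece of the diagram for $\iota_i\circ T_s^L$ is literally $\mathscr Y(L_{w_i})$ with the legs in the order of Remark \ref{order}, and confirming that doubling the $i$th leaf deforms the boundary of the transfer‑matrix annulus in precisely the way the ABC equation is set up to absorb. Once the conventions are pinned down, everything is formal.
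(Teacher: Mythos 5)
Your proposal is correct and follows exactly the paper's (much terser) argument: factor $\iota_s^t$ into single-leaf doublings, apply the ABC equation at the one new trivalent vertex, and let the codeword conventions of Definition \ref{codetree} handle the bookkeeping of which $L_w$ appears where. The extra detail you supply about matching codewords and cell positions is just an expansion of what the paper compresses into one sentence.
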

\begin{proof} Just apply the ABC equation every time $t$ differs from $s$ by doubling a single vertex. The 
formulas defining $w$ and $L_w$  take care of the book-keeping for the values of the spectral parameter.
\end{proof} 

Thus in a particular model, to exhibit scale invariant transfer matrices it will suffice to exhibit coherent choices of
tensors.
\begin{remark}\label{moregeneral} In fact, because we are dealing with the direct limit, a coherent choice of tensors does not have to 
be defined for \emph{all} words. Given $t\in \mathfrak T$ one can choose a solution to the ABC equation for every leaf
of $t$, form the corresponding transfer matrix on $\mathcal H_t$  and extend it using coherent choices for each leaf of $t$.
This will define an  operator on $\mathcal H$ which should be considered scale invariant. This requires a slight but 
obvious modificaiton of definition \ref{scaleinvariancedef} which we have not given to avoid confusion.   The restriction of 
this operator  to $\mathcal H_s$ for trees not containing
$t$ will not in general preserve the subspace $\mathcal H_s$ so is not a transfer matrix on $\mathcal H_s$.
\end{remark}
For the convenience of the reader we exhibit a pair $(t,\otimes^7 \mathcal h)$ whose tree has 7 leaves,  and the corresponding transfer matrix for
some choice of $L$'s:\\
\5
\vpic {tree1} {2.3in} \vpic {transfermatrix2} {2.8in}

Before exploring explicit solutions to the ABC equation we point out a few general features. 
  We will  exhibit a symmetry of the ABC equation that uses operations $\alpha$ and
  $\beta$ on certain subsets of $\otimes^4 \mathcal h$. Note that  $\otimes^4 \mathcal h$  becomes a unital associative algebra under (see remark \ref{order})
   $$(AB)_{i,j,k,\ell}=\sum_{p,q}A_{i,p,q,\ell}B_{p,j,k,q}$$ which we will call ``multiplication'' 
  and under $$(A.B)_{i,j,k,\ell}=\sum_{p,q}A_{p,q,k,\ell}B_{i,j,q,p}$$ which we will abusively call ``comultiplication''. Both multiplications have obvious diagrammatic representations.
 $\mathscr F$ will be the rotation by $\pi/2$: $$\mathscr F(A)_{i,j,k,\ell}=A_{\ell,i,j,k}$$ 
  The two multiplications are conjugate: $$\mathscr F(AB)=\mathscr F(A).\mathscr F(B)$$
  Or, $\mathscr F$ gives an isomorphism from $\otimes^4 \mathcal h$ under multiplication to $\otimes^4 \mathcal h$ 
  under comultiplication.
  
  % We will assume for simplicity that $\mathscr F^2=id$
  
  We will use $X^\tau$ for the inverse of $X$ for the comultiplication structure.

  \begin{lemma} \begin{enumerate} We have\\ 
  \item If $X^{-1}$ exists then $\mathscr F(X)^\tau $ does also and $$\mathscr F(X)^{\tau}= \mathscr F(X^{-1})$$.
 \item If $X^\tau$ exists then $\mathscr F(X)^{-1}$ does also and $$\mathscr F(X)^{-1}=\mathscr F(X^\tau)$$.
 \item The same assertions hold with $\mathscr F$ replaced by $\mathscr F^{-1}$.
  \end{enumerate}
  \end{lemma}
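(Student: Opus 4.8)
The plan is to treat $\mathscr F$ as an algebra isomorphism from $(\otimes^4\mathcal h,\cdot_{\mathrm{mult}})$ onto $(\otimes^4\mathcal h,\cdot_{\mathrm{comult}})$, as already noted in the text, and then simply transport the notion of inverse along this isomorphism. Concretely, for (1), suppose $X^{-1}$ exists, i.e. $XX^{-1}=X^{-1}X=\mathbf 1_{\mathrm{mult}}$, where $\mathbf 1_{\mathrm{mult}}$ is the unit for multiplication (the tensor $\delta_{i,\ell}\delta_{j,k}$, drawn as two horizontal cups/caps). Applying $\mathscr F$ and using $\mathscr F(AB)=\mathscr F(A).\mathscr F(B)$ gives $\mathscr F(X).\mathscr F(X^{-1})=\mathscr F(X^{-1}).\mathscr F(X)=\mathscr F(\mathbf 1_{\mathrm{mult}})$. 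So the only thing to check is that $\mathscr F$ carries the multiplicative unit to the comultiplicative unit, i.e. $\mathscr F(\mathbf 1_{\mathrm{mult}})=\mathbf 1_{\mathrm{comult}}$; this is a one-line index computation from the definitions $\mathscr F(A)_{i,j,k,\ell}=A_{\ell,i,j,k}$ and the explicit forms of the two units (or, diagrammatically, rotating the ``identity for multiplication'' picture by $\pi/2$ visibly yields the ``identity for comultiplication'' picture). Once that is in hand, $\mathscr F(X).\mathscr F(X^{-1})=\mathscr F(X^{-1}).\mathscr F(X)=\mathbf 1_{\mathrm{comult}}$ says exactly that $\mathscr F(X)$ is invertible for comultiplication with inverse $\mathscr F(X^{-1})$, which is the claim $\mathscr F(X)^\tau=\mathscr F(X^{-1})$.

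For (2) I would run the same argument in the other direction. Since $\mathscr F$ is a bijection with inverse $\mathscr F^{-1}$, and $\mathscr F^{-1}$ intertwines comultiplication with multiplication (apply $\mathscr F^{-1}$ to $\mathscr F(AB)=\mathscr F(A).\mathscr F(B)$, or check directly that $\mathscr F^{-1}(A.B)=\mathscr F^{-1}(A)\mathscr F^{-1}(B)$ by the same index shuffle), the existence of $X^\tau$, i.e. $X.X^\tau=X^\tau.X=\mathbf 1_{\mathrm{comult}}$, transports under $\mathscr F^{-1}\circ(\text{relabel})$. A cleaner way: put $Y=\mathscr F(X)$, so $X=\mathscr F^{-1}(Y)=\mathscr F^3(Y)$ (using $\mathscr F^4=\mathrm{id}$, which follows from the index formula). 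Hypothesis (2) is that $\mathscr F^{-1}(Y)$ has a comultiplicative inverse; applying part (1) with $\mathscr F$ replaced by $\mathscr F^{-1}$ — legitimate once we establish item (3) — yields that $\mathscr F^{-1}\big(\mathscr F^{-1}(Y)\big)\ldots$ hmm, to avoid circularity I would instead just mirror the computation of (1): from $X.X^\tau=X^\tau.X=\mathbf 1_{\mathrm{comult}}$ apply $\mathscr F^{-1}$ and use that $\mathscr F^{-1}$ turns comultiplication into multiplication and $\mathbf 1_{\mathrm{comult}}$ into $\mathbf 1_{\mathrm{mult}}$, giving $\mathscr F^{-1}(X)\,\mathscr F^{-1}(X^\tau)=\mathscr F^{-1}(X^\tau)\,\mathscr F^{-1}(X)=\mathbf 1_{\mathrm{mult}}$; relabelling $X\leftrightarrow\mathscr F(X)$ (permissible since $\mathscr F$ is onto) produces $\mathscr F(X)^{-1}=\mathscr F(X^\tau)$.

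For (3) the point is that everything used above is symmetric in $\mathscr F$ and $\mathscr F^{-1}$: $\mathscr F^{-1}$ is likewise an algebra isomorphism between the two structures (in the opposite direction), it is a bijection, it exchanges the two units, and $\mathscr F^{-4}=\mathrm{id}$. So repeating the two arguments verbatim with $\mathscr F$ everywhere replaced by $\mathscr F^{-1}$ (and hence $\tau$ and $-1$ swapping roles appropriately) gives the stated assertions. I expect no genuine obstacle here; the only thing that needs care is making the ``unit goes to unit'' statement precise for both multiplications and both directions, and keeping the bookkeeping of which inverse ($\tau$ versus $-1$) lives in which algebra straight — the diagrammatic pictures for the two units and for $\mathscr F$ as a $\pi/2$ rotation make all of this transparent, and I would present the proof diagrammatically with the index computation relegated to a remark.
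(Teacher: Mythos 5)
Your part (1) is correct and is exactly the paper's argument: $\mathscr F$ intertwines the two products and sends the multiplicative unit to the comultiplicative one, so it transports inverses to coinverses. The problem is in part (2), which the paper explicitly flags as ``a bit more subtle,'' and your proposal does not actually close it. What your computation establishes is the $\mathscr F^{-1}$-version of (2): applying $\mathscr F^{-1}$ (an isomorphism from comultiplication to multiplication) to $X.X^\tau=X^\tau.X=\mathbf 1_{\mathrm{comult}}$ gives $\mathscr F^{-1}(X)^{-1}=\mathscr F^{-1}(X^\tau)$, which is part of item (3), not item (2). The final step, ``relabelling $X\leftrightarrow\mathscr F(X)$, permissible since $\mathscr F$ is onto,'' is where the content of (2) actually lives, and surjectivity is not the relevant issue. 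To convert the identity $\mathscr F^{-1}(W)^{-1}=\mathscr F^{-1}(W^\tau)$ into the desired $\mathscr F(X)^{-1}=\mathscr F(X^\tau)$ you must substitute $W=\mathscr F^2(X)$, and then you need two things you have not proved: that $\mathscr F^2(X)$ is coinvertible whenever $X$ is, and that $\mathscr F^2(X)^\tau=\mathscr F^2(X^\tau)$. Both amount to the fact that $\mathscr F^2$ is a comultiplication antiautomorphism, which is precisely the lemma's hidden ingredient and the pivot of the paper's proof: $\mathscr F(X^\tau)=\mathscr F^{-1}(\mathscr F^2(X^\tau))=\mathscr F^{-1}(\mathscr F^2(X)^\tau)=\mathscr F(X)^{-1}$. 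The gap is easily filled by a one-line index check of $\mathscr F^2(A.B)=\mathscr F^2(B).\mathscr F^2(A)$, but as written your proof of (2) is incomplete.

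A related imprecision affects your part (3): it is not true that ``everything is symmetric'' in the naive sense, because $\mathscr F$ is an \emph{isomorphism} from multiplication to comultiplication but an \emph{anti}-isomorphism in the other direction (one checks $\mathscr F(A.B)=\mathscr F(B)\mathscr F(A)$), and likewise for $\mathscr F^{-1}$. This does not break the invertibility statements, since anti-homomorphisms still carry inverses to inverses, but it is exactly the kind of bookkeeping that must be stated rather than waved through, and it is why the paper proves (3) by applying $\mathscr F^{\pm 2}$ and invoking its antiautomorphism properties rather than by a verbatim repetition.
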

  \begin{proof} $\mbox{  }$\\
  \begin{enumerate}
  \item follows simply from the fact that $\mathscr F$ is an isomorphism from multiplication to comultiplication.
  \item is a bit more subtle. Since $\mathscr F^2$ is a comultiplication antiautomorphism,
 $\mathscr F(X^\tau)=\mathscr F^{-1}(\mathscr F^2 (X^\tau))=\mathscr F^{-1}(\mathscr F^2 (X)^\tau))=\mathscr F(X)^{-1}$ ,
 the last equality being because $\mathscr F^{-1}$ is an isomorphism from comultiplication to multiplication.
 \item follows by applying $\mathscr F^{-2}=\mathscr F^2$ to both sides and using its antiautomorphism properties.
 \end{enumerate}
\end{proof} 
 \begin{definition}Let $D=\{X|X^{-1} \mbox{ and } X^\tau \mbox{ exist }\}$.

 \end{definition}
 
Note that the set $D$  is NOT invariant under taking inverses and coinverses. For instance
    the element $(d^2-1)\BB + d\BC$ considered below is invertible and coinvertible but its inverse is not coinvertible. 
    But we can correct inverse and coinverse by $\mathfrak F$ to form $\alpha$ and $\beta$ whose
    domain and range behave appropriately.

 \begin{definition} Let  $X\in \otimes^4 \mathcal h$ be such that both $X$ and $\mathscr F^{-1} (X)$ are invertible. Then define
   $$\alpha(X)=\mathscr F^{-1}(X^{-1})\mbox{  and  } \beta(X)= \mathscr F(X^\tau)$$.
  \end{definition}
Observe that the domain of $\alpha$ is the set of invertibles and the domain of $\beta$ is the set of coinvertibles.
\begin{lemma} If $X\in Dom(\alpha)$ then $\alpha(x)\in Dom(\beta)$ and $\beta(\alpha(X))=X$. Also

if $X\in Dom(\beta)$ then $\beta(x)\in Dom(\alpha)$ and $\alpha(\beta(X))=X$.
And $\alpha$ and $\beta$ commute with $\mathscr F^2$.

\end{lemma}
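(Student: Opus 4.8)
The plan is to reduce everything to the preceding lemma and to the elementary identities for powers of the rotation $\mathscr F$: since $\mathscr F$ is rotation by $\pi/2$ we have $\mathscr F^{4}=\mathrm{id}$, hence $\mathscr F^{-1}=\mathscr F^{3}$ and $\mathscr F^{-2}=\mathscr F^{2}$, and all powers of $\mathscr F$ commute. Throughout I use, as observed just above, that $Dom(\alpha)$ is the set of invertibles and $Dom(\beta)$ the set of coinvertibles, so that $\alpha(X)=\mathscr F^{-1}(X^{-1})$ and $\beta(X)=\mathscr F(X^{\tau})$ are defined exactly when $X^{-1}$, resp. $X^{\tau}$, exists.

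First I would verify $\alpha(Dom(\alpha))\subseteq Dom(\beta)$ and $\beta\circ\alpha=\mathrm{id}$. Let $X$ be invertible; then $X^{-1}$ is invertible with inverse $X$, so part (1) of the preceding lemma in the form with $\mathscr F^{-1}$ (part (3)), applied to the element $X^{-1}$, gives that $\mathscr F^{-1}(X^{-1})^{\tau}$ exists and equals $\mathscr F^{-1}(X)$. Hence $\alpha(X)=\mathscr F^{-1}(X^{-1})$ is coinvertible, i.e.\ lies in $Dom(\beta)$, and $\beta(\alpha(X))=\mathscr F\big(\alpha(X)^{\tau}\big)=\mathscr F(\mathscr F^{-1}(X))=X$. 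The reverse statement is obtained symmetrically: for $X$ coinvertible, part (2) of the lemma applied to $X^{\tau}$ (whose coinverse is $X$) shows $\mathscr F(X^{\tau})^{-1}$ exists and equals $\mathscr F(X)$, so $\beta(X)\in Dom(\alpha)$ and $\alpha(\beta(X))=\mathscr F^{-1}\big(\beta(X)^{-1}\big)=\mathscr F^{-1}(\mathscr F(X))=X$.

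For the commutation with $\mathscr F^{2}$, I would first record the auxiliary identities $\mathscr F^{2}(X)^{-1}=\mathscr F^{2}(X^{-1})$ (when $X^{-1}$ exists) and $\mathscr F^{2}(X)^{\tau}=\mathscr F^{2}(X^{\tau})$ (when $X^{\tau}$ exists); each follows by applying the preceding lemma twice, e.g.\ $\mathscr F^{2}(X)^{-1}=\mathscr F(\mathscr F(X))^{-1}=\mathscr F\big(\mathscr F(X)^{\tau}\big)=\mathscr F\big(\mathscr F(X^{-1})\big)=\mathscr F^{2}(X^{-1})$. In particular $\mathscr F^{2}$ preserves $Dom(\alpha)$ and $Dom(\beta)$, and then for invertible $X$,
$$\alpha(\mathscr F^{2}X)=\mathscr F^{-1}\big(\mathscr F^{2}(X)^{-1}\big)=\mathscr F^{-1}\big(\mathscr F^{2}(X^{-1})\big)=\mathscr F^{2}\big(\mathscr F^{-1}(X^{-1})\big)=\mathscr F^{2}(\alpha X),$$
while for coinvertible $X$ the same chain with $\tau$ in place of $-1$ and $\mathscr F$ in place of $\mathscr F^{-1}$ gives
$$\beta(\mathscr F^{2}X)=\mathscr F\big(\mathscr F^{2}(X^{\tau})\big)=\mathscr F^{2}\big(\mathscr F(X^{\tau})\big)=\mathscr F^{2}(\beta X).$$

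I do not expect a genuine obstacle: the content is pure bookkeeping with the four clauses of the preceding lemma and the relations among powers of $\mathscr F$. The one point that requires attention is applying each clause in the correct direction --- it is the hypothesis on $X^{-1}$ (resp.\ $X^{\tau}$) that produces the $\tau$ (resp.\ $-1$) needed to land in the right domain --- together with checking that the two compositions above are everywhere defined, which is precisely what the identification of $Dom(\alpha)$ and $Dom(\beta)$ with the invertibles and the coinvertibles secures.
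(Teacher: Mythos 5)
Your proof is correct and follows essentially the same route as the paper, which disposes of the first two assertions by citing the preceding lemma on $\mathscr F(X)^{\tau}$ and $\mathscr F(X)^{-1}$ and calls the commutation with $\mathscr F^{2}$ trivial; you have simply supplied the bookkeeping (which clause of the lemma to apply to $X^{-1}$, resp.\ $X^{\tau}$, and the identities $\mathscr F^{2}(X)^{-1}=\mathscr F^{2}(X^{-1})$, $\mathscr F^{2}(X)^{\tau}=\mathscr F^{2}(X^{\tau})$) that the paper leaves implicit. No gaps.
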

\begin{proof} The first two assertions follow easily from the previous lemma.
The last assertion is trivial.
\end{proof}
    
    We shall now see how $\alpha$ and $\beta$ arise in the ABC equation.
    
   %But for solutions of equation \ref{ABC} we will be able to 
   %completely determine the intersection of the domains of $\alpha^n$.
 \5  
   \emph{For simplicity we assume from now on that $Y$ is invariant under rotation: $$Y_{i,j,k}=Y_{j,k,i}.$$}
   \begin{proposition}\label{ABCsymmetry}
   Suppose $A\in domain(\alpha),B\in domain(\beta)$. Then 
   $$ \mathscr{D}(A,B)=\mathscr{Y}(C)\iff  \mathscr{D}(\beta(B),\mathscr F^2 (C))=\mathscr{Y}(\mathscr F^2(A))\iff \mathscr{D}(\mathscr F^2(C),\alpha(A))=\mathscr{Y}(\mathscr F^2 (B))$$
   \end{proposition}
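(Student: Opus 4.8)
The plan is to prove Proposition \ref{ABCsymmetry} purely diagrammatically, by manipulating the tensor network pictures for $\mathscr D(A,B)$ and $\mathscr Y(C)$ using the rotation invariance of $Y$, the isometry relation $Y^*Y = \mathrm{id}$, and the algebraic identities relating $\mathscr F$, inverse, and coinverse recorded in the two lemmas above. The key observation is that the equation $\mathscr D(A,B)=\mathscr Y(C)$ is an equation between tensors in $\otimes^5 \mathcal h$, and both sides are built from the trivalent vertex $Y$ (now rotationally invariant) together with the four-valent boxes $A,B,C$ placed in a fixed cyclic order. Rotating the whole picture by $\pi/2$ (i.e.\ applying $\mathscr F^2$ in the appropriate slots) and then absorbing the rotation into the boxes converts one instance of the ABC equation into another, but with the roles of $A$, $B$, $C$ permuted and each replaced by its $\alpha$- or $\beta$-corrected inverse. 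The reason $\alpha$ and $\beta$ — rather than bare $X^{-1}$ or $X^\tau$ — are the right operations is precisely the point flagged in the remark after the definition of $D$: an inverse in one multiplication structure must be conjugated by $\mathscr F^{\pm 1}$ to land back in a set with the correct domain/range behaviour, and the lemma $\beta\circ\alpha = \mathrm{id} = \alpha\circ\beta$ is what makes the three conditions genuinely equivalent (the implications compose to the identity).

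Concretely, I would first set up the diagrammatic conventions carefully: fix the picture for $\mathscr D(A,B)$ and $\mathscr Y(C)$ in $\otimes^5 \mathcal h$ with the five external legs labelled, and record where $A$, $B$, $C$ sit and in which cyclic order their indices are read (Remark \ref{order}). Then I would establish the first equivalence $\mathscr D(A,B)=\mathscr Y(C)\iff \mathscr D(\beta(B),\mathscr F^2(C))=\mathscr Y(\mathscr F^2(A))$. The idea: take the diagram for $\mathscr D(A,B)=\mathscr Y(C)$, apply a $\pi/2$ rotation to the ambient disc; because $Y$ is rotation invariant the trivalent vertices are unchanged, while $A$, $B$, $C$ pick up $\mathscr F^{\pm 2}$; then at the vertex where $B$ (resp.\ $A$) appears, use invertibility of $B$ (it is in the domain of $\beta$) to slide it across a trivalent vertex, turning it into $\beta(B) = \mathscr F(B^\tau)$ on the other side — this is where the isometry relation and the comultiplication inverse get used, exactly as in part (2) of the first lemma. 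The bookkeeping shows the rotated equation is the claimed one. The second equivalence then follows either by repeating the same move one more time, or by composing the first equivalence with itself after relabelling; using $\alpha(\beta(X))=X$ and $\mathscr F^4 = \mathrm{id}$ (since $\mathscr F^2$ is an involution up to the stated antiautomorphism properties) one checks the triple of conditions closes up consistently.

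The main obstacle I expect is not conceptual but notational: getting the diagram rotations to produce exactly $\mathscr F^2$ (not $\mathscr F$ or $\mathscr F^{-1}$ or $\mathscr F^3$) in each slot, and correctly tracking which of the four arguments of each box the rotation cycles to which — a sign/orientation error here is easy and would swap $\alpha \leftrightarrow \beta$ or permute $A,B,C$ incorrectly. The cleanest way to control this is to do everything in terms of the two multiplication structures on $\otimes^4 \mathcal h$ and the intertwiner $\mathscr F$: translate the ABC equation into an algebraic identity among products of $A$, $B$, $C$, $Y$, and $Y^*$, apply $\mathscr F^2$ (which is a comultiplication antiautomorphism), use $\mathscr F(AB)=\mathscr F(A).\mathscr F(B)$ and the lemma relating $X^{-1}$, $X^\tau$ under $\mathscr F$, and then read the result back as an ABC equation. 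This reduces the whole proposition to the algebraic lemmas already proved, with the diagrams serving only as a guide; the isometry $Y^*Y=\mathrm{id}$ and rotation invariance $Y_{i,j,k}=Y_{j,k,i}$ enter to guarantee that the relevant compositions collapse so that the rotated identity is again of the form $\mathscr D(\cdot,\cdot)=\mathscr Y(\cdot)$ rather than something more complicated.
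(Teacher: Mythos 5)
Your plan is essentially the paper's proof: the paper attaches $B^\tau$ to both sides of $\mathscr D(A,B)=\mathscr Y(C)$ (cancelling $B$ against its coinverse, which is your ``slide $B$ across'' move), rotates the resulting picture by $2\pi/3$, and reads the rotated boxes back as $\beta(B)$, $\mathscr F^2(C)$, $\mathscr F^2(A)$, with reversibility giving the ``iff'' and the same move again giving the second equivalence. The one bookkeeping detail you left open resolves as follows: the ambient rotation is $2\pi/3$ rather than $\pi/2$, under which $A$ and $C$ acquire $\mathscr F^{2}$ while the attached $B^\tau$ acquires only a single $\mathscr F$, which is exactly why $\beta(B)=\mathscr F(B^\tau)$ appears.
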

   \begin{proof} Suppose $ \mathscr{D}(A,B)=\mathscr{Y}(C)$. Attach  $B^{\tau}$ to the diagrams of $\mathscr{D}(A,B)$ and $\mathscr{Y}(C)$ to obtain the following
   equality:\\
   
   \vpic {ABC5} {1.5in}=\vpic {ABC3} {1.5in}  = \vpic {ABC4} {1.5in} .

   Rotate by $2\pi/3$ and isotope a little to obtain: 
   \5 \5
 \hspace {1in}   \vpic {ABC6} {1.5in}   = \vpic {ABC7} {1.5in} .
 
 Rotating the appropriate tensors  we get
 $\mathscr{Y}(C)=\mathscr {D}(A,B) \implies  \mathscr{D}(\beta(B),\mathscr F^2 (C))=\mathscr{Y}(\mathscr F^2 (A))$.
 
 The process is clearly reversible so $ \mathscr{D}(A,B)=\mathscr{Y}(C)\iff  \mathscr{D}(\beta(B),\mathscr F^2 (C))=\mathscr{Y}(\mathscr F^2 (A)) $, and the other equivalence
 is proved similarly.
   \end{proof}
   
   Unfortunately $domain(\alpha^n)$ is not invariant under $\alpha$ for $n\in \mathbb Z$.

  \begin{definition} \label{scales} Let us say that $(A,B,C)\in \otimes^4 \mathcal h$  ``scales"  if it satisfies the ABC equation and
  $A, B, C \in domain(\alpha^n)$ for all $n\in \mathbb Z$.
  \end{definition}

\begin{theorem} \label{existence} Suppose $\mathscr F^2=id$ and  $(A,B,C)$ scales.
Then define $L_w$ inductively on words on $\{0,1\}$ by $L_0=A, L_1=B,$
$$ L_{w0}=\begin{cases} \alpha^{k+1}(B)&\mbox{ if } L_{w}=\alpha^k(A) \\
                                                         \alpha^k(C)&\mbox{ if } L_{w}=\alpha^k(B) \\
                                                         \alpha^k(A)&\mbox{ if } L_{w}=\alpha^k(C) 
\end{cases}$$ and
$$ L_{w1}=\begin{cases} \alpha^{k}(C)&\mbox{ if } L_{w}=\alpha^k(A) \\
                                                         \alpha^{k-1}(A)&\mbox{ if } L_{w}=\alpha^k(B) \\
                                                         \alpha^k(B)&\mbox{ if } L_{w}=\alpha^k(C) 
\end{cases}$$

Then $L_w$ is a coherent choice of tensors so the $L_w$ determine a scale invariant transfer matrix by proposition \ref{scaleinvariance}.
\end{theorem}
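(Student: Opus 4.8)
The plan is to verify directly that the inductively defined family $\{L_w\}$ satisfies the coherence condition $\mathscr{D}(L_{w0},L_{w1})=\mathscr{Y}(L_w)$ for every word $w$. By construction each $L_w$ is of the form $\alpha^k(A)$, $\alpha^k(B)$ or $\alpha^k(C)$ for some $k\in\mathbb Z$, and since $(A,B,C)$ scales, every such element lies in $\mathrm{domain}(\alpha^n)$ for all $n$, so all the expressions appearing are legitimate. Thus the proof reduces to checking, for each of the three possible ``types'' of $L_w$, that the prescribed pair $(L_{w0},L_{w1})$ together with $L_w$ solves the ABC equation.

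First I would record the base case: $L_0=A$, $L_1=B$, $L_\emptyset=C$ (or rather, the root word is handled by the hypothesis $\mathscr{D}(A,B)=\mathscr{Y}(C)$ directly). Then I would treat the inductive step in three cases according to whether $L_w=\alpha^k(A)$, $\alpha^k(B)$, or $\alpha^k(C)$. In the first case the rule gives $L_{w0}=\alpha^{k+1}(B)$ and $L_{w1}=\alpha^k(C)$, so I must show $\mathscr{D}(\alpha^{k+1}(B),\alpha^k(C))=\mathscr{Y}(\alpha^k(A))$. In the second case I need $\mathscr{D}(\alpha^k(C),\alpha^{k-1}(A))=\mathscr{Y}(\alpha^k(B))$, and in the third $\mathscr{D}(\alpha^k(A),\alpha^k(B))=\mathscr{Y}(\alpha^k(C))$. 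The last of these, for $k=0$, is exactly the hypothesis; the content is to propagate it.

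The engine is Proposition \ref{ABCsymmetry}, specialized to $\mathscr F^2=\mathrm{id}$ (and recalling that with $\mathscr F^2=\mathrm{id}$ we have $\beta=\alpha^{-1}$, since $\beta\circ\alpha=\mathrm{id}$). Under this hypothesis the proposition says $\mathscr{D}(A,B)=\mathscr{Y}(C)$ is equivalent to $\mathscr{D}(\alpha^{-1}(B),C)=\mathscr{Y}(A)$ and to $\mathscr{D}(C,\alpha(A))=\mathscr{Y}(B)$. These are precisely the cyclic relations that match the three cases above with $k=0$; applying $\alpha^k$ to both sides of each (using that $\alpha$ commutes with $\mathscr F^2$ and hence with the diagrammatic operations building $\mathscr{D}$ and $\mathscr{Y}$ — or more carefully, applying the proposition again with the tensors $\alpha^k(A),\alpha^k(B),\alpha^k(C)$ in place of $A,B,C$, which is legitimate because the scaling hypothesis keeps everything in the relevant domains) yields the general $k$ statements. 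So the structure is: start from the one given solution, use the two equivalences of Proposition \ref{ABCsymmetry} to move to the ``neighbouring'' solutions, and observe that the inductive rules for $L_{w0},L_{w1}$ are exactly designed to track which neighbour one lands on; the index bookkeeping ($k\mapsto k+1$ going into the $A$-slot on a $0$-turn, etc.) is precisely what makes the $\alpha$-powers consistent.

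The main obstacle I anticipate is the domain bookkeeping rather than the algebra: I must be careful that every application of Proposition \ref{ABCsymmetry} is to a triple whose first entry is in $\mathrm{domain}(\alpha)$ and second in $\mathrm{domain}(\beta)$, and that iterating stays legal — this is exactly where Definition \ref{scales} is used and cannot be weakened, since $\mathrm{domain}(\alpha^n)$ is noted not to be $\alpha$-invariant in general. A secondary point to get right is the verification that applying $\alpha^k$ commutes with the diagrammatic constructions $\mathscr{D}(-,-)$ and $\mathscr{Y}(-)$ in the way needed; since $\mathscr{D}$ and $\mathscr{Y}$ are built from the multiplication/comultiplication and the fixed tensor $Y$ (which is rotation-invariant), and $\alpha=\mathscr F^{-1}\circ(\,\cdot\,)^{-1}$ relates the two multiplications, one should instead phrase the whole argument as repeated invocation of Proposition \ref{ABCsymmetry} on suitably shifted triples, so that no independent ``$\alpha$ commutes with $\mathscr{D}$'' lemma is needed. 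Once those points are pinned down, coherence holds for all $w$, and Proposition \ref{scaleinvariance} delivers the scale invariant transfer matrix.
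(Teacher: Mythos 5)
Your proposal is correct and is essentially the paper's own argument: the paper's proof is the single assertion that the theorem ``follows immediately by induction from Proposition~\ref{ABCsymmetry}'', and your three\-/case induction --- re\-/invoking the proposition on the shifted triples $(\alpha^k(A),\alpha^k(B),\alpha^k(C))$ rather than trying to commute $\alpha$ past $\mathscr D$ and $\mathscr Y$, with Definition~\ref{scales} supplying the domain bookkeeping --- is exactly that induction written out. One caveat: your claim that the two equivalences ``precisely match'' the recursion at $k=0$ is not literally true, since Proposition~\ref{ABCsymmetry} with $\mathscr F^2=\mathrm{id}$ and $\beta=\alpha^{-1}$ yields $\mathscr D(\alpha^{-1}(B),C)=\mathscr Y(A)$ and $\mathscr D(C,\alpha(A))=\mathscr Y(B)$, whereas the theorem's rules call for $\mathscr D(\alpha(B),C)=\mathscr Y(A)$ and $\mathscr D(C,\alpha^{-1}(A))=\mathscr Y(B)$, so the exponent shifts carry the opposite sign. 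This mismatch is internal to the paper itself (its proof sketch lists ``$\alpha(B)CA$'' as a derived solution, agreeing with the theorem but not with the proposition as printed) and is harmless because the hypothesis of Definition~\ref{scales} is symmetric under $\alpha\leftrightarrow\alpha^{-1}$; still, a careful write\-/up should either carry the exponents the proposition actually produces or note explicitly that the two conventions give two equally valid coherent choices.
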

\begin{proof}This follows immediately by induction from \ref{ABCsymmetry}.
This formal proof somewhat obscures what is going on. The idea is that, once $ABC$ is a solution, so are
$\alpha(B)CA, \alpha(C)A\alpha(B),\alpha(A)\alpha(B)\alpha(C),C\alpha^{-1}(A)B$ and so on.
 
\end{proof}

Thus provided we have a solution to the ABC equation (with $\mathscr F^2=id$) the only problem in using it to construct
a scale invariant transfer matrix is the problem of the domains of $\alpha$ and $\beta$.  We will solve this completely in a special model.
 \iffalse

A special case is when $k=0$. Then there is only one solution of the ABC equation, call it $(\gamma,\kappa,\lambda)$, and call the
corresponding operators constructed by the proof of the theorem $T_\lambda$ with $T_{n,\lambda}$ being the transfer matrix which is
the restriction of $T_\lambda$ to $V_{2^n}$. The diagram below is not a planar tangle but a depiction of the $T_{n,\lambda}$ for 
$n=1,2,3,4$. It illustrates the proof of the previous theorem (note that we need to apply periodic boundary conditions):
\5 \5

\hspace{0.5in} \vpic {transferbuild} {4in}
\fi

\subsection{A concrete example: the (quantum)$SO(3)$-invariant spin $1$ chain.} \label{concrete} 
It was explained in \cite {J22} how subfactors and bimodules provide quantum spin chains more elaborate than
with ordinary spins. The spin state space may fractional dimension. In this sense there is for instance a spin
chain for the Andrews-Baxter-Forrester models of \cite{ABF}, on which the transfer matrix acts. Subfactors/bimodules
are known to be described by planar algebras (\cite{jo2,jo3}) and all the calculations we have done in this paper
are diagrammatic, so extend without alteration to (unshaded) positive definite planar algebras. 
The planar algebra is a graded vector space $\mathcal P= (P_n)$ on which the planar operad acts. Elements of
$P_n$ are called $n$-boxes and they may be inserted into the internal discs of a planar tangle. Each $P_n$ is 
equipped with an antilinear involution $*$ for which the sesquilinear form 
$$\langle R, S\rangle =    \vpic {ip} {1in}  $$
 is positive definite. (Illustrated here for $P_5$.) The internal discs of planar tangles have been reduced to points
 and the labels have been placed in regions near that point which correspond to the distinguished interval on 
 the boundary of the disc. The output discs of all tangles have been eliminated but are of course implicit in the diagrams.

The preceeding constructions for tensor networks work equally well for any $\mathcal P$.  One chooses any $Y=$\Y from
$P_3$ satisfying the isometry condition.
The direct system of Hilbert spaces is defined by
$A_Y(t)=\mathcal H_t=P_k$ if $t$ is a tree with $k$ leaves, and the $\iota_s^t$ are diagrammatic. The direct limit is again
denoted $\mathcal H$. Or $\mathcal H_{\mathcal P, Y}$ if necessary. (In fact all that is needed is an "annular" or
"affine" representation of the planar algebra in question-see \cite{gl,jo3,JR}-here we are just using the "trivial"
representation.)

We choose a planar algebra having the advantage that the $3$-box space is one dimensional. It is the planar algebra
for the quantum group $U_qSO(3)$ in its 3-dimensional representation. Alternatively it can be obtained from the
Temperley-Lieb algebra TL($\delta$) (\cite{Kff}) by doubling all strings and reducing by the JW idempotent in the 4-box space of TL.
Obtained in this way, its loop parameter is $d=\delta^2-1$. It is positive definite when $d=4cos^2\pi/n-1, n=6,7,8,\cdots$ or
$d\in \mathbb R,\geq 3$. We will call this planar algebra $\mathcal Q= (Q_n)$. $\mathcal Q$ is described in considerable
detail in \cite{MPS} (which actually gives a list of all the simplest planar algebras generated by a single 3-box).

\begin{remark} We should point out that in this case $\dim Q_1=0$ so the image of \Y in $Q_2$ is zero dimensional and
the spaces of the direct system really only begin at $Q_2$. Thus in forming a scale invariant transfer matrix from a 
solution of the ABC equation there is no constraint on $L_0$ and  $L_1$ which we can choose 
arbitrarily as "C"'s in solutions of the ABC equation and then extend to all of $\mathcal H_{\mathcal Q, Y}$ as in theorem \ref{existence}.
See remark \ref{moregeneral}

%On the other hand $Q_2$ has a privileged element given by
%a single string with two endpoints. We may refer to this vector as the "vacuum vector" and the following diagram of
%it, as an element of $Q_8$, might be useful in understanding the sequel, especially the inner product when applying
%periodic boundary conditions:

\end{remark} 

  There is an "a priori" solution of the ABC equation given by the braiding (which can be established by
  viewing $\mathcal Q$ as a cabled Temperley-Lieb planar algebra). We draw the picture:
  
  The braid solution $A=B=C= \vpic {crossing} {0.6in}$:
  $$\vpic {braidsolution} {2.4in}$$
  
  The trivalent vertex  $\Y$ is the unique (up to an irrelevant sign) non-zero self-adjoint element of $Q_3$.
  It is rotationally invariant and it is shown in \cite{MPS} that the following "skein" relations suffice to do all calculations in $\mathcal Q$ (along with positive definiteness):
  \5
  \vpic{skein1} {0.1in} $=0$, \vpic{skein2} {0.5in} $=\frac{1}{d-1}( \vpic{skein3} {0.5in} ) $ , and of course unitarity, \vpic{skein4} {0.3in} .

These structure constants are all real so we will be mainly thinking of real solutions to the ABC equation. Our answers
will apply to complex solutions though and the braid solutions are in fact complex for $d<3$.
  
  Since $\mathscr F^2=id$ on $Q_4$, we know that a scale-invariant transfer matrix can be formed from a solution of the ABC equation provided all powers $\alpha^n(A)$ (or $B$ or $C$) are invertible for both algebra structures. 
   If $X=p\BA+q\BB+r\BC$ we record the formula
   $$ \alpha(X)=-\frac{p}{q(p+q)}\BA+\frac{ d q r-p q - p r - q r }{q(d-1) (p + q) (q + d r)}\BB+\frac{(d-2) p+(d-1) q}{(d-1) q (p+q)}\BC$$
  We now solve
  the ABC equation using this formula for $\alpha$ to optimise reduction of the number of unknowns from 9 to 7.

   Let $A=a_1\BA+a_2\BB+a_3\BC$, $B=b_1\BA+b_2\BB+b_3\BC$ and $C=c_1\BA+c_2\BB+c_3\BC$, which we will represent
   in the more compact vector notation $A=(a_1,a_2,a_3)$, $B=(b_1,b_2,b_3)$ and $C=(c_1,c_2,c_3)$.
   Since $A$ and $B$ have to be in the domain of $\alpha$, it must be true, by our formula for $\alpha$, that $a_2$ and $b_2$
   are non-zero. Thus all solutions that we are considering come from solutions in which $a_2=1$ and $b_2=1$.

   Let's do a count of equations and unknowns.
The equations happen in the 5-box space which in this case is  (at most) 6 dimensional. It is spanned by
   $6$ explicit tangles, 5 of which are the rotations of a tangle with a single trivalent vertex \Y  and the other
   is any connected tangle with three instances of \Y. The inner product on $Q_5$ is positive definite since it is the restriction
   of the Temperley-Lieb inner product. So by taking inner products with the 6 spanning elements we obtain 6 equations
   in the 9 coefficients, now 7 after putting $a_2$ and $b_2=1$, of $A, B$ and $C$. So we have 6 equations in 7 unknowns
   and expect the solutions to depend on a single parameter.   
   
   Now let us turn to the equations in detail. Attaching a \vpic {Yupsidedown} {0.2in} to the top of the ABC equation
   we note the appearance of a tangle from \cite{jonogo} which will be used frequently later. 
   \begin{definition} \label{renormal} The    renormalisation tangle below defines the nonassociative algebra structure $!$ on $Q_4$:\\ 
   $$x!y =\vpic {renormed} {1.5in} $$
   \end{definition}

   Three of the seven equations for $A,B$ and $C$  are consequences of $A!B=C$ so we are left with a system of  3 equations   in 4 unknowns for $A$ and $B$.
   So we expect  solutions for three of the variables depending on a fourth.
   
   Here are the three equations (before choosing $a_2=b_2=1$):\\
   $$(1) \quad c_1+c_2=(a_2+da_3)(\frac{d-2}{d-1}b_1+b_3)$$
   $$(2)\quad  \frac{d-2}{d-1}c_1+c_3=(a_1+a_2)(b_1+db_2+b_3)$$
   $$(3)\quad  (a_1+da_2+a_3)(b_2+db_3)+(\frac{d-2}{d-1}a_1+a_3)(b_1+b_3)=$$
   $$ \frac{1}{d(d-1)}\{\frac{d(d-2)}{d-1}a_1b_1+d(a_2b_2+a_3b_3+a_1b_3+a_3b_1+d(a_2b_3+b_2a_3))\}$$
   
   We illustrate with the first equation which comes from taking the inner product of both diagrams in the ABC equation
   with \vpic {calculate1} {1in}. Doing this we get the equality:\\
   \5
   \vpic {calculate2} {1.3in}   $=$ \vpic {calculate3} {1in} \\
   which readily yields equation $(1)$ by applying the skein relations of $\mathcal Q$.

       The equations
   are all of the  form ``$\sum_{i,j} y_{i,j}a_ib_j=z"$ for  three sets of constants $y_{i,j}$ and $z$ (remember $C=A!B)$. Thus in principle they
   are easy to solve.
   
   We have solved the equations for $A,B$ and $C$ in terms of $a_1$ which we call $a$, and found (for $d\neq 3$) the unique solution:
   
 \noindent $\displaystyle A=(a,1,\frac{(d-3)a}{a+d^2-3 d+2})$  \\
  \5
 \noindent  $\displaystyle B=(-\frac{ (d-1)^2(a+1)}{a},1,\frac{(d^2-4d+3)(a+1) }{a+d-1})$\\
   \5
   
 \noindent  $\displaystyle C=(\frac{(d-1)^2(a+1)  ((d-2)a+d-1)}{-a (a+d-1))},\frac{(d-1)(a+1) }{a},\frac{(a+1) ((d-2)a +d-1)}{-a})$ \\
      
      These equations are still somewhat messy. The key to further progress is to observe that the transformation $\alpha$ preserves 
the solutions. In fact 
$$\alpha(A)=const(\frac{(d-1) ( (d^2-3d+1)a+d^2-3d+2)}{((d-2)a +d-1)},1,\frac{((d^2-3d+1)a+d^2-3d+2)}{a})$$

where $\displaystyle const= \frac{a ( (d-2)a+d-1)}{(1+a) (d-1) ((d^2-3d+1)a+d^2-3d+2)}$

 so  the effect on the variable $a$ is the linear fractional transformation:

$$\sigma(a)=-\frac{(d-1) ( (d^2-3d+1)a+d^2-3d+2)}{((d-2)a +d-1)}$$

The case $d=3$ is special and we will deal with it in the next subsection.

So if we assume $d\neq 3$ and change variables to $$a=-\frac{(1+\omega^2)}{\omega^2}\frac{ (z+\omega^2)}{ z+1}$$
with $d=\omega+\omega^{-1}+1$, then
$$A=(-\frac{(1+\omega^2) (z+\omega^2)}{\omega^2 (z+1)},1,-\frac{(\omega-1) (z+\omega^2)}{\omega (\omega z-1)}),$$ 
 $$\alpha(A(z))=\frac{\omega^2 (\omega^3-z) (\omega^2+z)}{(\omega^5-z) (\omega^4+z)}A(\frac{-z}{\omega^3}),
  B=A(\frac{z}{\omega^2})\mbox{   and   } A=\frac{\omega(z-\omega)}{z-\omega^3}C(\omega z).$$

%see spectrum.nb
    
\begin{theorem}\label{finalsolution}Suppose $d\neq 3$. Let $A$, $B$ and $C$ satisfy equation \ref{ABC} for some $C(z)$. Then $(A,B,C)$ scales in 
the sense of \ref{scales} provided  $z\neq \pm \omega^n$ for any $n\in \mathbb Z$.
\end{theorem}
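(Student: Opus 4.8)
The plan is to reduce the whole statement to two model-specific facts about the single curve $z\mapsto A(z)$ and then let the scale-invariance structure do the rest. Since $\mathscr F^2=id$ on $Q_4$, Theorem~\ref{existence} applies the moment we know $(A,B,C)$ scales, so by Definition~\ref{scales} everything comes down to checking that $A(z)$, $B(z)$ and $C(z)$ lie in $Dom(\alpha^n)$ for every $n\in\mathbb Z$ (with $\alpha^{-m}$ meaning $\beta^m$); concretely, that $\alpha$ and $\beta$ can be applied to each of the three tensors indefinitely, always landing in $D$. The first observation is that, by the three relations displayed just above the theorem, $B(z)=A(z\omega^{-2})$ and $C(z)$ is an explicit scalar times $A(z\omega^{-1})$, while applying $\alpha$ multiplies the argument of $A$ by $-\omega^{-3}$ and $\beta$ by $-\omega^{3}$ (the scalar being the displayed rational function $\tfrac{\omega^2(\omega^3-w)(\omega^2+w)}{(\omega^5-w)(\omega^4+w)}$, resp.\ its analogue). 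Hence every iterate $\alpha^n(A(z))$, $\alpha^n(B(z))$, $\alpha^n(C(z))$ that is defined is a nonzero scalar times $A(w)$ with $w\in\{\pm\omega^{j}z:j\in\mathbb Z\}$, the scalar being a finite product of values of that rational function (and of the fixed prefactors $\omega^{-2}$ and $\tfrac{z-\omega^4}{\omega(z-\omega^2)}$) along the orbit.

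Next I would pin down, once and for all, for which $w$ the tensor $A(w)=p(w)\BA+q(w)\BB+r(w)\BC$ (with $q\equiv1$) lies in $D$. For the multiplication the obstruction is already exhibited by the displayed coordinate formula for $\alpha$: $p\BA+q\BB+r\BC$ is invertible exactly when $q\neq0$, $p+q\neq0$ and $q+dr\neq0$ (recall $d\neq1$). Substituting the coordinates of $A(w)$ gives $p(w)+q(w)=-(w+\omega^4)/(\omega^2(w+1))$ and $q(w)+d\,r(w)=(w-\omega^5)/(\omega^2(\omega w-1))$, so multiplicative invertibility of $A(w)$, together with well-definedness of $A(w)$ itself, fails only at $w\in\{-1,\omega^{-1},-\omega^4,\omega^5\}$. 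For the comultiplication I would run the same analysis after rewriting $\mathscr F(A(w))$ in the basis $\BA,\BB,\BC$ via $\mathscr F(X)_{i,j,k,\ell}=X_{\ell,i,j,k}$ — equivalently, compute the $\mathscr F$-conjugate of the criterion above, which is the one governing $\beta$ — obtaining a determinant that again factors into three linear forms in $(p,q,r)$ whose vanishing on $A(w)$ occurs only at finitely many further values of $w$, all again of the form $\pm\omega^n$. The $\alpha$-scalar is visibly nonzero and finite off $\{\omega^3,-\omega^2,\omega^5,-\omega^4\}$, and the $B,C$-prefactors off $\{\pm\omega^n\}$. So there is a finite set $E\subset\{\pm\omega^n:n\in\mathbb Z\}$ outside which $A(w)\in D$ and all the scalars occurring above are nonzero and finite.

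The conclusion is then bookkeeping. If $z\neq\pm\omega^n$ for all $n$, then $\pm\omega^{j}z\neq\pm\omega^n$ for all $j,n$ (else $z=\pm\omega^{n-j}$), hence $\pm\omega^{j}z\notin E$ for every $j$. Feeding this into the reduction of the first step, the iteration $\alpha^n(A(z)),\alpha^n(B(z)),\alpha^n(C(z))$ never leaves the set of nonzero scalar multiples of $\{A(w):w\notin E\}\subset D$, for any $n\in\mathbb Z$, so it never terminates; thus $A,B,C\in Dom(\alpha^n)$ for all $n$ and $(A,B,C)$ scales (and by Theorem~\ref{existence} this yields a scale invariant transfer matrix). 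One small point to watch is a spurious ``zero times pole'': the $\alpha$-scalar vanishes at $w=\omega^3,-\omega^2$, which is precisely where the shifted argument sends $A$ through its pole at $-1$ or $\omega^{-1}$ — but these coincidences occur only at $w=\pm\omega^n$, outside the claimed range.

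The hard part will be the comultiplicative invertibility criterion for $p\BA+q\BB+r\BC$: unlike the multiplication determinant, which the paper has effectively already laid bare through the closed formula for $\alpha$, the comultiplication determinant has to be computed from scratch, its factorization into linear forms checked, and the substitution $p(w),q(w),r(w)$ verified to have all its zeros among the powers of $\pm\omega$. Everything else — the reduction to the single curve $A(w)$ via the displayed relations, and the deduction that $z\notin\{\pm\omega^n\}$ forces all $\alpha$- and $\beta$-iterates to stay in $D$ — is then routine.
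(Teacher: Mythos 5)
Your proposal is correct and follows essentially the paper's route: reduce everything to the orbit $\{A(\pm\omega^j z)\}$ via the displayed relations and check invertibility there. Indeed the three linear forms $q$, $p+q$, $q+dr$ that you extract from the denominators of the $\alpha$-formula are exactly the three spectral values of $(p,q,r)$ in the basis of minimal idempotents of $Q_4$ --- on $A(w)$ they evaluate to $1$, $-\frac{w+\omega^4}{\omega^2(w+1)}$ and $\frac{w-\omega^5}{\omega^2(\omega w-1)}$, which is precisely the spectrum the paper writes down --- so your multiplicative analysis and your bad set $\{-1,\omega^{-1},-\omega^4,\omega^5\}$ coincide with the paper's. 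The one genuine divergence is the step you flag as ``the hard part'': a from-scratch computation of the comultiplicative invertibility criterion. That step is unnecessary. By the lemma following the definition of $\alpha$ and $\beta$, $\alpha$ carries its domain (the invertibles) into the domain of $\beta$ (the coinvertibles), and vice versa; since every tensor in your two-sided orbit is a nonzero, finite scalar multiple of $\alpha(A(-\omega^3 w))$ with $-\omega^3 w$ again of the form $\pm\omega^j z$ and $A(-\omega^3 w)$ invertible, coinvertibility of every iterate is automatic once multiplicative invertibility is known along the whole orbit $\{\pm\omega^j z\}$. This is why the paper's proof consists of nothing beyond the spectrum computation. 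Your planned computation would in fact succeed (the duality guarantees that the comultiplication determinant's zeros on the curve lie in $\{\pm\omega^n\}$), but it buys nothing and costs exactly the labour you identify as the obstacle; the rest of your argument --- the orbit bookkeeping, the nonvanishing of the prefactors, the ``zero times pole'' check, and the observation that $z\notin\{\pm\omega^n\}$ propagates to the whole orbit --- is exactly right.
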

\begin{proof} The spectrum of $A$ is easy to work out by changing to the basis of minimal idempotents for $Q_4$. 
It is $$\{\frac{z-\omega^5}{\omega^2 (\omega z-1)},-\frac{z+\omega^4}{\omega^2 (z+1)},1\}$$
So provided $z \neq \pm \omega^n$ for any $n$,  the above formulae show that none of $A, B$ or $C$ nor any
power of $\alpha$ applied to them, has zero in its spectrum. \end{proof}

\begin{corollary}\label{precision}For any two complex numbers $z_0$ and $z_1$ satisfying $z_i\neq \pm \omega^n$ for $i=0,1$, there are 
scale invariant transfer matrices  $T(z_0,z_1)_t$ on $\mathcal H_{\mathcal Q,Y}$ such that, on $\mathcal H_{\Y}$ they are:
$$T(z_0,z_1)_{\Y}=\vpic {initialinQ} {1.5in}  $$
Moreover  $L_{0w}$ is of the form $\kappa A(\varepsilon \omega^p z_0)$ where $\kappa\in \mathbb C, \varepsilon\in\{\pm1\}$
 and $p\in \mathbb Z$ are all determined by $w$, and similarly for $L_{1w}$
\end{corollary}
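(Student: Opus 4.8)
The plan is to feed the scaling solution of Theorem~\ref{finalsolution} into Theorem~\ref{existence}, exploiting the fact that $\dim Q_1=0$ -- so that the direct system effectively begins at $\mathcal H_\Y=Q_2$ and there is no coherence constraint on $L_0,L_1$ -- in order to seed the coherent family with $C$-tensors rather than with an $A$ and a $B$. Concretely, fix $z_0,z_1$ with $z_i\neq\pm\omega^n$ and set $L_0:=C(z_0)$ and $L_1:=C(z_1)$, the tensors decorating the two leaves of the one-caret tree $\Y$. With periodic boundary conditions the associated transfer matrix $T^L_\Y$ on $Q_2$ is literally the diagram \vpic{initialinQ}{1.5in}, which is the asserted value on $\mathcal H_\Y$.

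Next I would extend $L$ coherently to all words beginning with $0$ or $1$; this is the ``slight but obvious modification'' of Remark~\ref{moregeneral}. When the leaf coded by $0$ is doubled we must pick $L_{00},L_{01}$ with $\mathscr D(L_{00},L_{01})=\mathscr Y(L_0)=\mathscr Y(C(z_0))$, and by the analysis of \S\ref{concrete} (uniqueness of the ABC solution for $d\neq3$) we may take $(L_{00},L_{01})=(A(z_0),B(z_0))$; thereafter Proposition~\ref{ABCsymmetry}, packaged as the recursion of Theorem~\ref{existence}, determines $L_{00w},L_{01w}$ for all longer words, and symmetrically on the other side. The construction is legitimate because $\mathscr F^2=\mathrm{id}$ on $Q_4$, so Theorem~\ref{existence} applies, and because by Theorem~\ref{finalsolution} the triple attached to $z_i$ lies in $\mathrm{Dom}(\alpha^n)$ for every $n\in\mathbb Z$ precisely thanks to $z_i\neq\pm\omega^n$ (scaling is unaffected by the scalar normalisations, since $\alpha(\lambda X)=\lambda^{-1}\alpha(X)$ and invertibility is insensitive to a nonzero scalar). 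Proposition~\ref{scaleinvariance} then produces the scale invariant operator $t\mapsto T(z_0,z_1)_t$.

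To read off the form of the $L_{0w}$ I would propagate the spectral variable through the recursion with the three identities recorded just before Theorem~\ref{finalsolution}: $B(z)=A(z/\omega^2)$; $C(z)$ is a scalar multiple of $A(z/\omega)$ (rearranging $A=\tfrac{\omega(z-\omega)}{z-\omega^3}C(\omega z)$); and $\alpha(A(z))$ is a scalar multiple of $A(-z/\omega^3)$. Each step of the recursion for $L_{w0}$ or $L_{w1}$ either permutes the $A/B/C$ role (multiplying the argument by $\omega^{\pm1}$ or $\omega^{\pm2}$) or applies a power of $\alpha$ (multiplying it by $-\omega^{-3}$), or both, and in every case introduces only a scalar. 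An induction on word length then gives $L_{0w}=\kappa\,A(\varepsilon\omega^p z_0)$ with $\kappa\in\mathbb C$, $\varepsilon\in\{\pm1\}$ (the sign accumulated from the applications of $\alpha$) and $p\in\mathbb Z$, all read off from $w$ by the recursion, and symmetrically $L_{1w}$ in terms of $z_1$.

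I expect the only real work to be the bookkeeping: checking that the recursion seeded at the $C$'s is well-founded and self-consistent (it is, being a recursion over the rooted tree of binary words), that the accumulating scalars never vanish -- which is exactly what $z_i\neq\pm\omega^n$ buys, via the spectrum computation in the proof of Theorem~\ref{finalsolution} -- and that the data $(\kappa,\varepsilon,p)$ really is a function of $w$ alone. None of this is deep; it is simply where the care goes.
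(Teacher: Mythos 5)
Your proposal is correct and follows essentially the same route as the paper: seed the two leaves of $\Y$ with $C(z_0)$ and $C(z_1)$ (using the freedom afforded by $\dim Q_1=0$ as in the remark preceding the braid solution), extend via Theorem~\ref{existence} with scaling guaranteed by Theorem~\ref{finalsolution}, and track the spectral parameter through the relations $B=A(z/\omega^2)$, $C(z)=\mathrm{const}\cdot A(z/\omega)$, $\alpha(A(z))=\mathrm{const}\cdot A(-z/\omega^3)$. The paper's own proof is just a two-sentence appeal to these same ingredients; you have merely supplied the bookkeeping it explicitly declines to write out.
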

\begin{proof} By the formula before theorem \ref{finalsolution} we can begin with either $A,B$ or $C$ and theorem \ref{existence}
shows how to extent to all the $\mathcal H_t$. Explicit formulae for $\kappa\in \mathbb C, \varepsilon\in\{\pm1\}$
 and $p\in \mathbb Z$ could be constructed from theorem \ref{existence} but we will only need the fact that they are determined
 by the leaf coded for by $w$.
\end{proof} 
We have thus  determined all values of $z$ for which this procedure gives  a scale invariant transfer matrix.
Note that a priori  we could have asserted that, for $d=4\cos^2\pi/n -1$ and $n=6,7,8,\cdots$ there is a non-empty open interval 
of real values of $a$ for which the semicontinuous limit transfer matrix exists. This is because the transformation $\alpha$ is then periodic 
so at most a finite set of values of $a$ can be bad.
\begin{remark}
\rm{There are only isolated values of $C$ for which $A=B$. Thus the scale invariant transfer matrix is not
spatially homogeneous for an interval's worth of spectral parameter values.}
\end{remark}

\subsection{Commuting transfer matrices.}

We are unable to think of an a priori reason why the solutions of the ABC equations should have anything to do
with the Yang-Baxter equation (\cite{Ba,Yang}). We will see, however that not only are the solutions we have obtained  a well known solution of YBE, but 
also the spatially inhomogeneous transfer matrices on the semicontinuous limit (on the circle-periodic boundary
conditions) commute for different values of the parameter. Note that this is not automatic-given a solution $R(\lambda)$ of the
YBE , one may not choose $\lambda_i$ arbitrarily and expect $T(R(\lambda_1),R(\lambda_2),\cdots,R(\lambda_n)) $ to commute with
each other.

 % hidden note-these formula can be found in ABCinbraidbasisMay2017.nb see also spectrum.nb
We will identify our solutions with the ``Izergin-Korepin" model. In order to do this we will express the solutions of the
ABC equation that we found in section 2.3 in terms of another basis of $Q_4$- the "braid basis" consisting of a crossing, its inverse
and the identity. If we set $$R=(-\omega - \omega^{-1})\BA+ \omega^{-1}\BB+(\omega - 1)\BC$$
(recall $d=1+\omega+\omega^{-1}$) then the braid equation holds and moreover $R+R^{-1}=-(w+1/w)(\BB+\BC)$
so we are dealing with representations of the BMW algebra \cite{BW,MuJ}.

If we rewrite $A(z,\omega)$ from subsection \ref{concrete} in the braid basis we obtain:
$$A=\frac{1}{(1+z) (\omega-1)}R+z \frac{1+\omega+\omega^3+\omega^4}{(1+z) \omega (z+\omega^2)}id+\frac{z \omega}{(1+z)(1-\omega)}R^{-1}$$

Multiplying by $(1 + z)(\omega - 1)\omega^{-3/2}(1 +\frac{ \omega^2}{z})$ and putting $z=\frac{\omega^2}{x}$ one gets 
$$A'=\frac{1+x}{\omega^{3/2}}R+\frac{(\omega^2-1)(\omega^3+1)}{\omega^{5/2}}id-\frac{(1+x) \omega^{3/2}}{x}R^{-1}.$$
Putting $\omega=e^{-\eta/2}$ and $x=-e^{-\lambda}$ and $R(\lambda, \eta)=A'$, one obtains exactly twice  formula $(III.9)$ for $\check R(\lambda,\eta)$
in \cite{KMN}. This is the Izergin-Korepin R matrix \cite{IK}. It thus satisfies the Yang-Baxter equation (\cite{Ba,Yang}):\\
\5
\mbox{                       }\qquad\qquad  \vpic {YBE1} {1.3in}  $=$   \vpic{YBE2} {1.3in}
\5
We will use this in the following form which is easily deduced from the previous equations:\\
\5
\mbox{                       }\qquad\qquad  \vpic {YBE3} {1.8in}  $=$   \vpic{YBE4} {1.3in}
\5

 \iffalse  At this stage we need to be a bit more careful about what we mean by the transfer matrix with periodic boundary 
   conditions in the case of $\mathcal Q$ for which $dim Q_1=0$ so one cannot use a single $\Y$ or more generally
   a tree to embedd a "vacuum vector" in a higher $Q_n$ since there is no vacuum vector to embedd. But since
   $dim Q_2=1$ there is an essentially unique choice of vacuum vector in $Q_2$ which is a single curve joining two boundary
   points of its disc. This corresponds to a spin chain with two spins. To obtain, for instance, the doubled spin chain with
   four spins one simply attaches a $\Y$ to each of these boudary points
   to obtain the vacuum vector seen in $Q_4$. Continuing in this process one obtains the following picture for the vacuum
   in 
 \fi

 \begin{definition} For all  $\lambda$ and $t$ put $T^{\lambda}=T(-e^{-\lambda},-e^{-\lambda})_t$ 
 (see \ref{precision}).
 \end{definition}
 \begin{theorem}For all $t\in \mathfrak T$ and all $\lambda$ and $\mu$,  $$T^{\lambda}_tT^{\mu}_t=T^{\mu}_tT^{\lambda}_t$$
 \end{theorem}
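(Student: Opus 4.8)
The plan is to prove the commutation of the transfer matrices on each $\mathcal H_t$ by the classical ``railroad'' argument of the quantum inverse scattering method, adapted to the inhomogeneous, tree-indexed setting. Recall from Corollary \ref{precision} that for each leaf $\mathfrak l$ of $t$, coded by a word $w$, the tensor $\mu_{\mathfrak l}=L_w$ placed at that leaf equals $\kappa\, A(\varepsilon\omega^p z)$ for constants $\kappa,\varepsilon,p$ depending only on $w$ (not on $z$); when $z=-e^{-\lambda}$ the substitution $x=\omega^2/z$ and the reparametrisation $\omega=e^{-\eta/2}$, $x=-e^{-\lambda}$ turn $A(z)$ into the Izergin--Korepin $\check R$-matrix, which satisfies the Yang--Baxter equation displayed above. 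The overall scalar $\kappa$ and the sign/power shift $\varepsilon\omega^p$ only translate the additive spectral parameter $\lambda\mapsto\lambda_{\mathfrak l}(\lambda)$ by a leaf-dependent constant and multiply by a leaf-dependent scalar, so $T^\lambda_t=\prod_{\mathfrak l}\kappa_{\mathfrak l}\cdot T(\check R(\lambda_{\mathfrak l_1}(\lambda)),\dots,\check R(\lambda_{\mathfrak l_k}(\lambda)))$ with periodic boundary conditions, where the \emph{difference} $\lambda_{\mathfrak l}(\lambda)-\lambda_{\mathfrak l}(\mu)=\lambda-\mu$ is the \emph{same} for every leaf.

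First I would set up the auxiliary-space formalism: write the transfer matrix in the standard way as a partial trace over a one-dimensional auxiliary line of an ordered product of $\check R$-matrices (equivalently $RL$-operators) along the $k$ leaves, using the periodic identification of the first and last horizontal edges explained after Remark \ref{order}. Because $Q_2$ is one-dimensional the auxiliary space here is genuinely the ``single-string'' space, so the monodromy operator is literally the composite of the $L_w$'s read around the annulus. Second, I would invoke the YBE in the ``$RLL$'' form displayed just before the definition of $T^\lambda$ to commute two monodromy operators carrying parameters $\lambda$ and $\mu$ past each other: the key algebraic identity $R(\lambda-\mu)\,T^\lambda_t\otimes T^\mu_t = T^\mu_t\otimes T^\lambda_t\,R(\lambda-\mu)$ on the doubled auxiliary space, where the intertwiner $R(\lambda-\mu)$ depends only on $\lambda-\mu$. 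Here is exactly where the leaf-independence of $\lambda_{\mathfrak l}(\lambda)-\lambda_{\mathfrak l}(\mu)$ is used: the YBE must be applied at each leaf with the \emph{same} third parameter $\lambda-\mu$, and the leaf-dependent shifts $\lambda_{\mathfrak l}$ cancel in that difference, while the scalar prefactors $\kappa_{\mathfrak l}$ are common to $T^\lambda_t$ and $T^\mu_t$ and so are irrelevant. Third, taking the partial trace over both auxiliary spaces and using cyclicity of the trace together with invertibility of $R(\lambda-\mu)$ (which holds generically and by continuity in general) yields $T^\lambda_t T^\mu_t = T^\mu_t T^\lambda_t$.

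The main obstacle, and the step deserving the most care, is the bookkeeping that reduces the inhomogeneous tree transfer matrix to a product of Izergin--Korepin $\check R$-matrices whose parameters differ from the nominal $\lambda$ by \emph{additive} constants depending only on the leaf. Corollary \ref{precision} gives the multiplicative form $\kappa A(\varepsilon\omega^p z)$; one must check that under $z=\omega^2/x$, $x=-e^{-\lambda}$ the maps $z\mapsto \varepsilon\omega^p z$ become $\lambda\mapsto\lambda+c_w$ for a real (or complex) constant $c_w$ — the sign $\varepsilon=\pm1$ corresponds to a shift by $0$ or $i\pi$, and the power $\omega^p=e^{-p\eta/2}$ to a shift by $p\eta/2$ — and that these are genuinely independent of $\lambda$, so the difference of any two such shifted parameters is $\lambda-\mu$ as required. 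One also needs that the leaf scalars $\kappa_w$ are nonzero (they are, being explicit products of the nonvanishing factors in the formulas of subsection \ref{concrete} under the standing hypothesis $z\neq\pm\omega^n$), so they factor out cleanly. Once this normalisation is in place, the commutation is the textbook QISM computation; the only subtlety beyond the classical case is that $t$ is an arbitrary binary tree rather than a uniform chain, but since the YBE is applied locally at each leaf and the periodic boundary condition is the same annular closure in every case, the standard proof goes through verbatim.
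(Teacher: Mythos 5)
Your proof is correct and is essentially the paper's own argument: the paper pushes $R(\lambda-\mu-\eta/2)$ diagrammatically around the annulus using the YBE until it meets its inverse, which is exactly the diagrammatic transcription of your $RLL$-relation-plus-partial-trace formulation. Both proofs handle the spatial inhomogeneity by the same observation from Corollary \ref{precision} — each leaf carries $\kappa R(\lambda+\phi)$ with $\kappa,\phi$ depending only on the word $w$, so the leaf-dependent shifts cancel in the difference $\lambda-\mu$ and the scalars are common to both sides of the YBE.
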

\begin{proof}

We begin by reviewing
 the well known diagrammatic argument for commuting transfer matrices with periodic boundary conditions and 
spatially homogeneous dependence on the spectral parameter (see \cite{J22}). One starts with
$$T(\lambda)T(\mu)\vpic {lambdamu} {2in} $$
Then one attaches $R(\lambda-\mu-\eta/2)$ and its inverse (for horizontal multiplication) to
the right of the picture so as not to change the operator $T(\lambda)T(\mu)$. 
Then successively apply YBE, moving $R(\lambda-\mu-\eta/2)$ to the left one step at a time, each time interchanging
an $R(\lambda)$ with an $R(\mu)$. Thus an intermediate step would look like:\\

$$\vpic {intermediate} {3in} $$

where we have used a * to indicate the spectral parameter value corresponding to the inverse.

When $R(\lambda-\mu-\eta/2)$ has reached the left side of the picture it meets its inverse because of the periodic
boundary condtions, and disappears, to leave
$$T(\mu)T(\lambda)=\vpic{mulambda} {2in}  $$

This argument goes through almost without alteration for any $T^\lambda_t$ and   $T^\lambda_t$. As
the $R(\lambda-\mu-\eta/2)$ goes to the left, it will meet the leaf coded for by the word $w$ (on $\{0,1\}$), the pair 
$\vpic {pair} {0.6in} $ where $L$ is, by corollary \ref{precision}, $\kappa R(\lambda+\phi)$ and $M$ is $\kappa R(\mu+\phi)$
for some $\kappa$ and $\phi$ depending only on $w$. The $\kappa$ factors  are the same on both sides of the YBE and
the $\phi$'s cancel in the YBE so that $R(\lambda-\mu-\eta/2)$ goes past the pair, swapping $L$ and $M$.

\end{proof}

\subsection{The case $d=3$, i.e.  $\omega=1$}  Note that in this case the planar algebra $\mathcal Q$ is the tensor
planar algebra based on a 3-dimensional Hilbert space so the vector spaces $\mathcal H_t$ are ordinary spin chains.

If we solve equations $(1)$ and $(2)$ of the previous subsection for $a_1$ and $a_3$ and substitute in equation $3$ we
find that the solution curve for $d\neq 3$ degenerates into a line pair $b_1=-2$ and $b_3=1$. These two lines are 
interchanged by $\alpha$. Here are the two solutions which we parametise by $v$ and $w$ to avoid confusion:\\
$$A_1=(v,1,0),\quad B_1=(-4\frac{1+v}{v},1,0), \mbox{    and    } C_1=2\frac{1+v}{v}(-2,1,-(1+\frac{v}{2}))$$
and
$$A_2=(-2,1,w),\quad B_2=(-2,1,\frac{w}{w+1}), \mbox{    and    } C_2=(-2\frac{2w+1}{w+1},1,0)$$

The transformation $\alpha$ is a little complicated with this parametrisation so we introduce the changes of 
variables $$v(s)=\frac{6s-2}{-3s+2}\mbox{  and  } w(t)=\frac{1}{3t-4}.$$
Then $\displaystyle \alpha(A_1(s))=\frac{3s-1}{3s}A_2(s+1)$ and $\displaystyle \alpha(A_2(t))=\frac{3t-2}{3t-1}A_1(t)$.
So $$\alpha^2(A_1(s))=\frac{3s}{3s-1}\frac{3s+1}{3s+2}A_1(s+1).$$

We have the following other relations among the $A$'s, $B$'s and $C$'s:

\begin{enumerate}
\item $\displaystyle B_1(s)=A_1(\frac{3s+1}{3})$
\5
\item $\displaystyle C_1(s)=\frac{3s-1}{3s}A_2(\frac{3s+2}{3})$
\5
\item $\displaystyle B_2(t)=A_2(\frac{3t+1}{3})$
\5
\item $\displaystyle C_3(t)=A_1(\frac{3t-1}{3})$
\end{enumerate}

Thus any one of $A_1,A_2,B_1,B_2,C_1,C_2$ determines all the others via linear fractional transformations of
the parameters, and multiples which are themselves linear fractional transformations of the parameters. 

We finally turn to the question of when the solution $A_1,B_1,C_1$ scales in the sense of
\ref{scales}. This will be true provided two conditions are satisfied:
\begin{enumerate}
\item The constant factors introduced when applying powers of $\alpha$ must never be zero or infinity.
\item  The spectrum, for both multiplication and comultiplication, of all powers $\alpha^n$ of all $A$'s $B$'s and $C$'s
must not contain zero.
\end{enumerate}

The three values of the spectrum of an element $(x,y,z)$ are determined by linear functions with constant coefficients
of $x$, $y$, $z$. Since applying $\alpha$ just adds $1$ to the argument $s$ We see that there are a finite number of linear fractional transformations $\frac{\beta_i s+\gamma_i}{\delta_i s+\epsilon_i}$ and constants $\kappa_i$ so that
$A_1(s),B_1(s),C_1(s)$ scales provided $$\frac{\beta_i (s+n)+\gamma_i}{\delta_i (s+n)+\epsilon_i}\neq \kappa_i,$$ which
can be written $s\notin z_i+\mathbb Z$ for any $n$ and some finite set of numbers $z_i$. 
Clearly there are non-empty open intervals  of $s$-values for which this is true. The same argument applies to 
$A_2(s),B_2(s),C_2(s)$,

We have thus proved, by theorem \ref{existence} :
\begin{theorem} Suppose $d=3$. There are non-empty open intervals of $s$ values for which the transfer matrix
$$\vpic {transfer2} {2in} $$ on $Q_2=\mathcal H_{\vpic {Y} {0.071in} }$ extends to a scale-invariant transfer matrix on 
$\mathcal H_{\mathcal Q, Y }$. (Periodic boundary conditions as usual.)
\end{theorem}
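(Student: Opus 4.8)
The plan is to deduce the statement from Theorem~\ref{existence}, applied to one of the two solution families exhibited just above it. Since $\mathscr F^2=\mathrm{id}$ on $Q_4$, the hypothesis of that theorem is met as soon as we produce a triple $(A,B,C)$ satisfying the ABC equation which \emph{scales} in the sense of Definition~\ref{scales}, i.e.\ lies in $\mathrm{domain}(\alpha^n)$ for every $n\in\mathbb Z$. Granting that, Theorem~\ref{existence} builds a coherent choice $L_w$ with $L_0=A$, $L_1=B$, Proposition~\ref{scaleinvariance} upgrades it to a scale-invariant operator on $\mathcal H_{\mathcal Q,Y}$, and its restriction to the base space $Q_2$ (the space attached to the two-leaf tree, with leaves coded by $0$ and $1$) is $T(L_0,L_1)=T(A,B)$, which is exactly the transfer matrix drawn in the statement once $(A,B,C)=(A_1(s),B_1(s),C_1(s))$. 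So the whole content is to pin down the set of $s$ for which $(A_1(s),B_1(s),C_1(s))$ scales.

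First I would unwind ``$\mathrm{domain}(\alpha^n)$ for all $n$'' into the two concrete requirements recorded just before the theorem: (i) none of the scalar prefactors produced while repeatedly applying $\alpha$ and $\alpha^{-1}=\beta$ to $A_1(s),B_1(s),C_1(s)$ is $0$ or $\infty$; and (ii) for every $k\in\mathbb Z$ the three spectral values of $\alpha^k(A_1(s))$, $\alpha^k(B_1(s))$, $\alpha^k(C_1(s))$, computed in \emph{both} the multiplication and the comultiplication structures on $Q_4$, are nonzero. The comultiplication part of (ii) (equivalently, coinvertibility) genuinely has to be checked: as already noted, $D$ is not stable under inversion, so membership in $\mathrm{domain}(\alpha^n)$ really is an $n$-dependent condition; this is precisely the verification carried out in the proof of Theorem~\ref{finalsolution} for $d\neq3$.

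Next I would feed in the bookkeeping already set up in this subsection. In the variables $v(s),w(t)$ one has $\alpha(A_1(s))=\tfrac{3s-1}{3s}A_2(s+1)$ and $\alpha(A_2(t))=\tfrac{3t-2}{3t-1}A_1(t)$, hence $\alpha^2(A_1(s))=\tfrac{3s}{3s-1}\tfrac{3s+1}{3s+2}A_1(s+1)$, while the four relations listed among the $A$'s, $B$'s and $C$'s write each of $A_1,A_2,B_1,B_2,C_1,C_2$ as a scalar times $A_1$ or $A_2$ evaluated at a linear-fractional shift of its parameter. Consequently every element met under all integer powers of $\alpha$ applied to $A_1(s),B_1(s),C_1(s)$ is a scalar multiple of $A_1(s')$ or $A_2(s')$ with $s'$ ranging over $s$ plus a fixed lattice, the scalar being a product of finitely many factors of the form ``parameter plus a rational constant''. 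Since each spectral value of an element of the three-dimensional algebra $Q_4$ is a fixed linear form in its coordinates, and the coordinates of $A_1,A_2$ are linear fractional in their parameters, every quantity appearing in (i) and (ii) is a linear fractional function of $s$, and one checks directly from the explicit formula for $\alpha$ that none of these finitely many functions is identically $0$, $\infty$, or (in case (ii)) the value forcing the spectrum to contain $0$. A linear fractional function attains a given value at most once, so on each ``sheet'' the offending $s$ form a single point; translating back, $(A_1(s),B_1(s),C_1(s))$ fails to scale only for $s$ in a finite union $\bigcup_i (z_i+\mathbb Z)$ of cosets of $\mathbb Z$, a discrete, nowhere dense subset of $\mathbb R$. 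Its complement therefore contains non-empty open intervals, and for $s$ in such an interval Theorem~\ref{existence} provides the asserted scale-invariant extension; the family $(A_2(s),B_2(s),C_2(s))$ is treated identically.

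The step I expect to be the main obstacle is the middle part of the last paragraph: showing that only \emph{finitely many} genuinely distinct scalar prefactors and spectral linear forms occur across the infinitely many powers $\alpha^n$, so that the bad set really is a finite union of cosets of one fixed lattice rather than an uncontrolled (possibly dense) set, and that none of those finitely many rational functions of $s$ degenerates to a constant with the forbidden value. This is the exact analogue, in the non-periodic case $\omega=1$, of the shortcut available when $d=4\cos^2(\pi/n)-1$: there $\alpha$ is periodic so only finitely many parameter values are excluded outright, whereas here $\alpha$ is a genuine shift, so the excluded set is infinite but still nowhere dense, which is all the theorem requires.
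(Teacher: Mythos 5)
Your proposal is correct and follows essentially the same route as the paper: reduce to Theorem \ref{existence} via the two solution families $A_1,B_1,C_1$ and $A_2,B_2,C_2$, use the change of variables under which $\alpha$ becomes (up to scalars) the shift $s\mapsto s+1$, and observe that the scaling conditions (nonvanishing prefactors and nonzero spectra for both products) exclude only a finite union of cosets $z_i+\mathbb Z$, whose complement contains open intervals. The one point you flag as the main obstacle --- that only finitely many linear fractional conditions arise and none degenerates to the forbidden constant --- is exactly the step the paper itself asserts with the same brevity, so you have not missed anything the paper supplies.
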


  \section{Spatially homogeneous scale invariant transfer matrix and Hamiltonian via quadratic forms.} 
  \subsection{Generalities.} It is easy to argue on
  physical grounds that scale invariance for an observable should be expressed in terms of its expected values. This leads
  immediately to the notion of weak scale invariance in definition \ref{scaleinvariancedef}. The word "weak" is 
  unfortunate in this context  but is in universal use in functional analysis to describe properties determined by matrix
  coefficients rather than an operator as a whole. A strong motivation for considering weakly scale invariant transfer matrices 
  is that they arose inevitably in \cite{jonogo} in the calculation of the correlation of a state with its rotation by a single lattice site.
  Indeed our construction of weakly scale invariant transfer matrices will allow us to deepen the study of the behaviour of the
  rotation by one lattice site.
  
  We immediately give the criterion for weak scale invariance, invoking the algebra structure $x!y$ of definition \ref{renormal}.
  Let $L, t$ and $w$ be as in definitions  \ref{codetree} and \ref{transfermatrix}.
 
 \begin{proposition} The map $t\mapsto T^L_t$ defines a weakly scale invariant operator on  the direct system $A_Y$ provided 
 $$ L_{w0}!L_{w1}=L_w$$ for all $w$.
 \end{proposition}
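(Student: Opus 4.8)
The plan is to mimic the proof of Proposition~\ref{scaleinvariance}, replacing the identity $\iota_s^t\circ T_s = T_t\circ\iota_s^t$ by the weaker identity on matrix coefficients $\langle T_t(\iota_s^t\xi),\iota_s^t\eta\rangle = \langle T_s(\xi),\eta\rangle$. As in that proof, since any $\leq$ in $\mathfrak T$ factors into steps that double a single leaf, it suffices to treat the case where $t$ is obtained from $s$ by doubling one leaf; the general case follows by composing the one-step identities. So I fix $s$ with $k$ leaves, a leaf $\mathfrak l$ with code word $w$, and let $t$ be $s$ with $\mathfrak l$ doubled, so $\iota_s^t = \iota_\mathfrak{l}$ applies $Y$ in the $\mathfrak l$-th tensor slot.

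First I would write out $\langle T_t^L(\iota_\mathfrak{l}\xi),\iota_\mathfrak{l}\eta\rangle$ diagrammatically: it is the closed (annular, periodic) diagram consisting of the transfer-matrix row $T_t^L$ with $\mu_\mathfrak{l}' = L_{w0}$ and $\mu_{\mathfrak{l}+1}' = L_{w1}$ at the two new leaves (and $L_{w_i}$ unchanged elsewhere), capped top and bottom by the vectors $\iota_\mathfrak{l}\eta$ (conjugated) and $\iota_\mathfrak{l}\xi$. Both caps carry a $Y$ (resp.\ $Y^*$) in the $\mathfrak l$-slot coming from $\iota_\mathfrak{l}$. The key local move is to slide these $Y$, $Y^*$ together against the two boxes $L_{w0}$, $L_{w1}$: pulling $Y^*$ down from the outgoing $\eta$-cap through $L_{w0}$ and $L_{w1}$, and $Y$ up from the incoming $\xi$-cap, one isolates precisely the tangle that appears in Definition~\ref{renormal} applied to $L_{w0}$ and $L_{w1}$. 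That is, the local picture between the two new vertical strings, with the $Y$/$Y^*$ attached horizontally, is exactly $L_{w0}!L_{w1}$ sitting on a single string where the doubled leaf used to be.

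Hence, invoking the hypothesis $L_{w0}!L_{w1} = L_w$, the local tangle collapses to a single box $L_w$ on a single string, which is exactly the $\mathfrak l$-th entry of the transfer-matrix row $T_s^L$ on $\mathcal H_s$. After this replacement the whole closed diagram becomes $\langle T_s^L(\xi),\eta\rangle$, which is the right-hand side. Since $\xi,\eta\in\mathcal H_s$ were arbitrary and $s\leq t$ arbitrary, Definition~\ref{scaleinvariancedef}(2) is satisfied. Note one does \emph{not} need $T_t^L$ to preserve $\mathcal H_s$, which is why only the weak condition can be expected here.

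The main obstacle is bookkeeping rather than conceptual: one must check that the $Y^*$ from the outgoing cap genuinely slides down \emph{past} $L_{w0}$ and $L_{w1}$ to meet the $Y$ from the incoming cap in the configuration defining $!$, i.e.\ that the string-ordering conventions of Remark~\ref{order} make the tangle produced by capping $\iota_\mathfrak{l}$ on both sides of the two-box piece of $T_t^L$ literally equal to the tangle $x!y$ of Definition~\ref{renormal} with $x=L_{w0}$, $y=L_{w1}$ — as opposed to some rotation of it. This is the step I would carry out most carefully with an explicit picture; once the identification of tangles is pinned down, the rest is an immediate substitution and the direct-limit composition argument.
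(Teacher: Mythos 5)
Your proof is correct and is essentially the paper's own argument: the paper simply says the result ``follows just as in Proposition~\ref{scaleinvariance}'', i.e.\ reduce to doubling one leaf at a time and apply the identity $L_{w0}!L_{w1}=L_w$ locally, which is exactly your recognition that the $Y$ and $Y^*$ coming from $\iota_{\mathfrak l}$ cap the pair $L_{w0},L_{w1}$ into the renormalisation tangle of Definition~\ref{renormal}. Your extra care about composing the one-step weak identities and about the string-ordering conventions only makes explicit what the paper leaves implicit.
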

 \begin{proof}
 This follows just as in proposition \ref{scaleinvariance}
 \end{proof}
 
 At the risk of labouring the point, here is the picture of the equation $$ L_{w0}!L_{w1}=L_w$$ with $A=L_{w0}, B=L{w1}$ and $C=L_w$:
 $$\vpic {weakly1} {1.5in} \qquad = \qquad \vpic {weakly} {1.2in} $$
 
 This makes it obvious that a solution of the ABC equation provides one of the equation $A!B=C$ (scale invariance implies
 weak scale invariance). But there are in general many more solutions and weakly scale invariant transfer
 matrices can exhibit quite arbitrary dependence on the lattice point. We shall thus focus on  \emph{ spatially homogeneous ones} which
 are  provided by solutions of the equation $X!X=X$. 
 \begin{definition} If $\mathcal P$ is a planar algebra and $Y=\Y\in P_3$  we call the quadratic map
  $\mathcal R:P_4\rightarrow P_4$,  $$\mathcal R(X)=X!X$$ the \emph{renormalisation dynamical system} of $\mathcal P,Y$.
 \end{definition}
 
 \begin{corollary} If $X=\{X_i, i=1,2,3,\cdots\}$ is a sequence of elements of $P_4$ obtained by back-iterating $\mathcal R$, i.e. such
 that $\mathcal R(X_{i+1})= X_i$, then we get a weakly scale invariant transfer matrix by putting
 $$L_w=X_{length (w_{\mathcal l})}$$ for all leaves $\mathcal l$ of all trees. 
 \end{corollary}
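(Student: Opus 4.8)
The plan is to verify that the family $L_w=X_{\mathrm{length}(w_{\mathcal l})}$ defined in the statement is a \emph{coherent} choice of tensors in the sense appropriate to weak scale invariance, and then invoke the preceding proposition. Recall that the proposition just above (the criterion for weak scale invariance) says that $t\mapsto T^L_t$ is weakly scale invariant as soon as $L_{w0}!L_{w1}=L_w$ holds for every word $w$ on $\{0,1\}$. So the entire content of the corollary is to check this single family of equations for the proposed $L_w$.

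First I would unpack the indexing. For a leaf $\mathcal l$ of a tree $t$, the word $w=w(\mathcal l)$ codes the path from the root to $\mathcal l$ (Definition \ref{codetree}), and the prescription is $L_w=X_{\mathrm{length}(w)}$ — that is, $L_w$ depends on $w$ only through its length. Now fix a word $w$ of length $n$; then $w0$ and $w1$ both have length $n+1$, so by definition $L_{w0}=L_{w1}=X_{n+1}$ and $L_w=X_n$. Hence the equation $L_{w0}!L_{w1}=L_w$ reduces precisely to $X_{n+1}!X_{n+1}=X_n$, i.e. to $\mathcal R(X_{n+1})=X_n$, which is exactly the back-iteration hypothesis on the sequence $X=\{X_i\}$. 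So every instance of the coherence equation is satisfied, and the previous proposition delivers a weakly scale invariant operator $t\mapsto T^L_t$ on the direct system $A_Y$; Proposition \ref{determines}(2) then packages it as a sesquilinear form $[,]$ on $\mathcal H$.

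There is essentially no obstacle here: the statement is a direct corollary, and the only thing to be careful about is the bookkeeping — confirming that both children of a length-$n$ word are length-$(n+1)$ words and that $L_w$ was defined to be length-dependent only, so that the general coherence equation collapses to the one-variable recursion $\mathcal R(X_{i+1})=X_i$. One should also note (as the surrounding text already does) that in the spatially homogeneous situation this is exactly the demand that the $L$'s be obtained by back-iterating the renormalisation dynamical system $\mathcal R$, and that fixed points $X!X=X$ are the stationary such sequences. I would therefore write the proof as two sentences: reduce $L_{w0}!L_{w1}=L_w$ to $X_{n+1}!X_{n+1}=X_n$ using that $\mathrm{length}(w0)=\mathrm{length}(w1)=\mathrm{length}(w)+1$, observe this is the hypothesis $\mathcal R(X_{i+1})=X_i$, and apply the preceding proposition.
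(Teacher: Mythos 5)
Your proof is correct and is exactly the argument the paper intends (the paper omits the proof, treating the corollary as immediate from the preceding proposition): since $L_w$ depends only on $\mathrm{length}(w)$, the coherence condition $L_{w0}!L_{w1}=L_w$ collapses to $X_{n+1}!X_{n+1}=\mathcal R(X_{n+1})=X_n$, which is the back-iteration hypothesis.
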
 
 In this case the sesquilinear form is \emph{spatially homogeneous} since if all the leaves of a tree have the same
 length, the same value of the spectral parameter is used.

 \begin{definition} 
 We call $[,]_X$ the sesquilinear form on $\mathcal H$ determined by this weakly invariant transfer matrix (see 
 proposition \ref{determines}). If $X$ is a fixed point for $\mathcal R$ we let $X$ mean the constant sequence with all terms equal to $X$.
 \end{definition}
 Let us make a couple of easy general remarks. From now on we will use $\mathcal R'$ to denote $\mathcal R$ on the projective space
 $\mathbb P P_4$ and, for $X\in P_4$, $\mathcal R'(X)$ to denote $\mathcal R'$ of the class of $X$ in $\mathbb P P_4$. 
 \begin{enumerate}
 \item  A sequence $X_i'$ of 
 back iterates of $\mathcal R'$ can always be lifted to back iterates $X_i$ of $\mathcal R$. The liftings are unique up to signs.
 \item A sequence $X_i$ as above is by definition an element of the projective or inverse limit of the inverse system over $\mathbb N$
 all of whose spaces are $P_4$ and whose connecting maps are all $\mathcal R$. Thus up to signs the space of spatially homogeneous
 scale invariant transfer matrices is a subspace of the inverse limit of  $\mathbb P P_4$  with connecting maps $\mathcal R'$, a compact space.
 \item
 Back iterating from a given $X_1$ may or may not be possible since $\mathcal R$ may not be surjective.A special case where
 back iteration is always possible  is obtained from a fixed point or more generally a   \emph{periodic point} $W$ for $\mathcal R$ (i.e. there exists
 a $p\in \mathbb N$ for which $\mathcal R^p(W)=W$).One may then put $X_1=W$ and choose the back iterates  $X_i$
 to be $\mathcal R^j(W)$ for the appropriate choice of $W$. We let $[,]_W$ be the sesquilinear form for this choice of
 back iterates.
 
  \item In an entire  neighbourhood of a repelling fixed point of $\mathcal R$ back iteration is always possible:
 \begin{definition} A fixed point $X$ for $\mathcal R$ is called \emph{repelling} if there is a norm $||-||$ on $P_4$ and  $\epsilon>0$ 
 such that $||\mathcal R(Y)-X||> ||Y-X||$ whenever $||Y-X||<\epsilon$.
 \end{definition}
 \begin{theorem}\label{repelling} If $X$ is a repelling fixed point for $\mathcal R$ there is a neighbourhood $V$ of $X$ such that 
 $\mbox{for all  } Y \in V, \mbox{ there are }Y_i\in V, i=1,2,\cdots, \mbox{ with } Y_1=Y, \mathcal R(Y_{i+1})=Y_i$ for all $i$.
 
 The corresponding weakly invariant invariant transfer matrix is weakly analytic as a function of $Y$.
 
 \end{theorem}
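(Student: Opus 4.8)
The plan is to exhibit a holomorphic local inverse branch $g$ of $\mathcal R$ fixing $X$ that strictly moves nearby points towards $X$, to obtain the backward orbit by iterating $g$, and then to deduce weak analyticity from the holomorphy of $g$. Recall that $!$ is a (complex) bilinear operation on $P_4$ by Definition~\ref{renormal}, so $\mathcal R(X)=X!X$ is a homogeneous quadratic polynomial map $P_4\to P_4$, in particular entire. First I would check that the differential $D\mathcal R_X$ is invertible. Polarizing gives $\mathcal R(X+h)=X+(X!h+h!X)+h!h=X+D\mathcal R_X(h)+h!h$ with purely quadratic remainder; so if $D\mathcal R_X(v)=0$ for some $v\neq 0$ then $\mathcal R(X+tv)-X=t^2(v!v)$, whose norm $|t|^2\|v!v\|$ is $o(|t|\,\|v\|)$ as $t\to 0$, contradicting the repelling inequality $\|\mathcal R(X+tv)-X\|>\|tv\|$ for small $t\neq 0$. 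Since $\dim P_4<\infty$, $D\mathcal R_X$ is thus invertible, and the holomorphic inverse function theorem yields an open $U\ni X$ on which $\mathcal R$ restricts to a biholomorphism onto an open set containing a ball $B(X,\rho_0)$; let $g\colon B(X,\rho_0)\to U$ be the associated inverse branch, so $g$ is holomorphic, $g(X)=X$, and $\mathcal R\circ g=\mathrm{id}$.

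Next I would shrink $\rho_0$ to some $\rho$ so that, by continuity of $g$ and $g(X)=X$, one has $\|g(Z)-X\|<\epsilon$ for all $Z\in B(X,\rho)$, with $\epsilon$ as in the definition of ``repelling''. Then for $Z\in B(X,\rho)$ with $Z\neq X$, applying the repelling inequality to $Y:=g(Z)$ gives $\|Z-X\|=\|\mathcal R(g(Z))-X\|>\|g(Z)-X\|$, so $g$ maps $V:=B(X,\rho)$ into itself and strictly decreases distance to $X$ off $X$. Given $Y\in V$, set $Y_1=Y$ and $Y_{i+1}=g(Y_i)$: by induction $Y_i\in V$, and $\mathcal R(Y_{i+1})=\mathcal R(g(Y_i))=Y_i$ for all $i$, which is the first assertion. (If $Y=X$ the orbit is constant, consistently with $X$ being a fixed point.)

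For the analyticity statement, fix a tree $t\in\mathfrak T$, of depth $n$, and $\xi,\eta\in\mathcal H_t$. By the corollary above, the weakly scale invariant transfer matrix attached to the back-iteration sequence $(Y_i)$ has $L_w=Y_{\mathrm{length}(w)}$, so only the finitely many tensors $Y_1=Y,\,Y_2=g(Y),\dots,Y_n=g^{\,n-1}(Y)$ enter $T^L_t$. The matrix of $T^L_t$ on the finite-dimensional space $\mathcal H_t$ has entries that are polynomials in the components of these tensors, hence holomorphic functions of $Y\in V$ after composition with the holomorphic maps $g,\dots,g^{\,n-1}$. Thus $Y\mapsto[\xi,\eta]_Y=\langle T^L_t\xi,\eta\rangle$ is holomorphic on $V$, and since $t,\xi,\eta$ were arbitrary the transfer matrix is weakly analytic in $Y$.

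I expect the only delicate point to be the first two steps: extracting invertibility of $D\mathcal R_X$ from the deliberately weak notion of ``repelling'' (using the purely quadratic remainder of $\mathcal R$), and then reconciling the two neighbourhoods --- the domain of $g$ supplied by the inverse function theorem and the radius $\epsilon$ in the definition --- by shrinking, so that $g$ genuinely contracts a ball into itself. Once this local inverse branch is in hand, the backward orbit and the weak analyticity are immediate.
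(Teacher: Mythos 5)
Your proof is correct and follows essentially the same route as the paper's: invert $\mathcal R$ locally near $X$, use the repelling inequality to see that the local inverse maps a small ball into itself, iterate that inverse to produce the backward orbit, and read off weak analyticity of the matrix coefficients from the analyticity of the inverse branch. The only difference is that you make explicit a step the paper leaves implicit --- deducing invertibility of $D\mathcal R_X$ from the repelling condition via the purely quadratic remainder of $\mathcal R$, so that the inverse function theorem applies --- which is a worthwhile elaboration but not a change of method.
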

 \begin{proof}By the repelling property $\mathcal R$ can be inverted locally to give $\mathcal R^{-1}$ which
  takes the ball of radius $\epsilon$
 inside itself. So choose the back iterates by iterating $\mathcal R^{-1}$. The inverse is analytic so that if $\xi$ and $\eta$ are fixed
 $[\xi,\eta]_{\{Y_i\}}=\langle (\mathcal R^{-1})^n\xi,\eta\rangle$ for some fixed $n$ which is as analytic as $\mathcal R^{-1}$.
 \end{proof}

 \end{enumerate} 
  \subsection{Topsey-turvey momenta.}
  One can rephrase Stone's theorem on one parameter unitary groups as follows:
  "Given a strongly continuous one parameter unitary group on Hilbert space, $t\mapsto u_t$, 
  $\langle u_t \xi, \eta \rangle-\langle \xi,\eta\rangle$ tends to zero as $t\rightarrow 0$, but one may
  renormalise $\langle u_t \xi, \eta \rangle$ by dividing by $t$ to obtain a sesquiliear form $[,]$ on a dense subspace such
  that $$\frac{\langle u_t \xi, \eta \rangle-\langle \xi,\eta\rangle}{t}\rightarrow [\xi,\eta].$$
  
  Thus in quantum mechanics one obtains energy and momentum from time evolution and space translation.
  
       If $\rho_n$ is the rotation (=translation) by $\frac{1}{2^n}$ on the direct limit Hilbert space we 
 saw in \cite{jonogo} that $\langle \rho_n\xi,\eta\rangle$ is determined by iterating $\mathcal R$ starting with $\BA$. $\mathcal R$ being a homogeneous quadratic, there is always a neighbourhood of zero within which all points iterate rapidly
 towards zero under it. The result of \cite{jonogo} followed simply by showing that $\BA$ is in such a neighbourhood
 after a few iterates. In this paper we will take inspiration from Stone's theorem and \emph{renormalise} $\langle \rho_n\xi,\eta\rangle$ so that it has  limits as $n\rightarrow \infty$. These limits
will be expressed in terms of $[\xi,\eta]_X$ for some $X$'s.
  Unfortunately we may not always know the exact rate at which $\langle \rho_n \xi, \eta \rangle$ approaches zero but it will be
  possible to create the quadratic form without that knowledge. The situation is a little more complicated than in Stone's theorem as
  there will be two quadratic forms rather than one that control the behaviour of $\langle \rho_n \xi, \eta \rangle$.

  Since $\rho_n$ is a unitary given by spatial translation (by a single lattice site) we will call these quadratic forms the 
  topsey turvey momenta.

  \begin{lemma} Suppose $X$ is a fixed point for $\mathcal R$ and that  $X_n$ is a sequence in $P_4$ with
  $\lim_{n\rightarrow \infty} X_n= [X]$. Then there exist constants $c_n\in \mathbb C$ such that, for any $\xi,\eta\in \mathcal H$, $$\lim_{n\rightarrow \infty} c_n\langle T_{X_n}\xi,\eta\rangle=[\xi,\eta]_X$$ 
  \end{lemma}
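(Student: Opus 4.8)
The plan is to exploit the fact that on a finite tree $t$ with $k$ leaves, $T_{X_n}|_{\mathcal H_t}$ is a fixed universal polynomial (in fact multilinear) expression in the entries of $X_n$, while $T_X|_{\mathcal H_t}=T^{[X]}_t$ is the same expression in $X$. First I would fix $\xi,\eta\in\mathcal H$ and choose a single tree $t$ (e.g. some $\mathcal T_m$) with $\xi,\eta\in\mathcal H_t$; this is possible since $\mathcal H$ is the direct limit. On this fixed $\mathcal H_t$ the matrix of $T_{X_n}$ depends continuously --- indeed polynomially --- on $X_n\in P_4$, so $\lim_n X_n=[X]$ does \emph{not} immediately give convergence because $X_n$ converges only \emph{projectively}; one must first rescale $X_n$ to an honest lift. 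So the key step is: write $X_n=\lambda_n Z_n$ with $Z_n\in P_4$ a genuine lift of the projective class $[X_n]$ chosen so that $Z_n\to X$ in $P_4$ (not merely in $\mathbb P P_4$); here $\lambda_n\in\mathbb C^\times$ absorbs the scaling. Since the transfer matrix on a tree with $k$ leaves is homogeneous of degree $k$ in the tensor placed at each leaf --- each leaf contributes one copy of $L_w=X_n$ --- we get $T_{X_n}|_{\mathcal H_t}=\lambda_n^{k}\,T_{Z_n}|_{\mathcal H_t}$, where $k$ is the number of leaves of $t$.

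Next I would set $c_n=\lambda_n^{-k}$. Then $c_n\langle T_{X_n}\xi,\eta\rangle=\langle T_{Z_n}\xi,\eta\rangle$, and since $Z_n\to X$ in $P_4$ and the matrix coefficients of $T_{Z}|_{\mathcal H_t}$ depend continuously (polynomially) on $Z$, the right-hand side converges to $\langle T_X\xi,\eta\rangle=[\xi,\eta]_X$, using that $X$ is a fixed point of $\mathcal R$ so that the constant sequence $X$ is a legitimate back-iteration sequence and $[\,,\,]_X$ is the associated sesquilinear form (Proposition \ref{determines}(2)). The only genuine subtlety is that the constants $c_n$ must be independent of $\xi$ and $\eta$: this is automatic because $\lambda_n$, and hence $c_n=\lambda_n^{-k}$, is extracted from $X_n$ alone. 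One must however check that $c_n$ does not secretly depend on the chosen tree $t$: if $t'\supseteq t$ also contains $\xi,\eta$ and has $k'$ leaves, then the homogeneity degree changes from $k$ to $k'$, so naively $c_n$ would change. The resolution is that $\langle T_{X_n}\xi,\eta\rangle$ computed in $\mathcal H_{t'}$ versus $\mathcal H_t$ differ precisely by the factors coming from the extra trivalent vertices in the inclusion $\mathcal H_t\hookrightarrow\mathcal H_{t'}$, and since $X_n\to[X]$ with $X$ a fixed point of $\mathcal R$ (so $X!X=X$), those extra factors are asymptotically trivial up to the scalar already tracked; one fixes the ambiguity once and for all by normalising, say, so that a chosen nonzero matrix entry of $Z_n$ equals the corresponding entry of $X$.

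The main obstacle I expect is precisely this bookkeeping of the scalar $\lambda_n$ and the verification that the single sequence $c_n$ works simultaneously for all $\xi,\eta$ and is compatible with enlarging the tree --- in other words, that the projective convergence $X_n\to[X]$ together with the fixed-point property of $X$ under $\mathcal R$ is exactly what is needed to kill the otherwise tree-dependent discrepancies. Everything else (continuity of matrix coefficients in $Z_n$, existence of a common tree for $\xi$ and $\eta$, legitimacy of the constant back-iteration sequence at a fixed point) is routine and cited from earlier in the paper. A clean way to organise the argument is to first prove the statement with $X_n$ replaced by an arbitrary honest lift converging to $X$ in $P_4$ --- where one simply takes $c_n=1$ --- and then observe that for a general projectively-convergent $X_n$ one divides by the scalar relating $X_n$ to such a lift, raised to the appropriate homogeneity power.
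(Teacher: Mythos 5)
Your proposal is correct and follows essentially the same route as the paper: the paper lifts the projectively convergent sequence $X_n$ to a sequence converging honestly to $X$ in $P_4$ (phrased via a local trivialisation of the $\mathbb C^\times$-bundle $P_4\setminus\{0\}\to\mathbb P P_4$) and then invokes continuity of the matrix coefficients of $T_Z$ in $Z$, exactly as you do with $Z_n=\lambda_n^{-1}X_n$. Your explicit tracking of the homogeneity degree $k$ (so $c_n=\lambda_n^{-k}$ rather than $\lambda_n^{-1}$) and of the compatibility of $c_n$ under enlarging the tree --- which works out because $\mathcal R$ is homogeneous quadratic and $X$ is a fixed point --- is in fact more careful than the paper's one-line argument, which glosses over both points.
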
 
  \begin{proof} The action of $\mathbb C^\times$ on $P_4\setminus \{0\}$ gives a locally trivial fibre bundle with base space
  $\mathbb P P_4$. Thus in a trivialising neighbourhood of $X$ one can just lift the $X_n$ to $c_n X_n$ so they have the same
  vertical coordinate as $X$ in this trivialisation. Fix $\xi$ and $\eta$ in some $P_{2^k}$. By the estimate of theorem 4.0.1 of
  \cite{jonogo}, $||\lambda_nX_n-X||$ tends to zero so
   $c_n\langle T_{X_n}\xi,\eta\rangle\rightarrow \langle T_{X}\xi,\eta\rangle=[\xi,\eta]_X$.
  \end{proof}
  Now we can get the result we want:
  \begin{theorem}Suppose $X$ is a fixed point for $\mathcal R^p$ and that  $W\in P_4$ is such that
  $\lim_{n\rightarrow \infty} \mathcal (R')^{np+i}(W)= (\mathcal R')^i (X)$ for $i=0,1,\cdots, p-1$. Then there exist constants $c_m\in \mathbb C$ such that, for any $\xi,\eta\in \mathcal H$, and $i=0,1,\cdots, p-1$, $$\lim_{n\rightarrow \infty} c_{np+i}\langle T_{\mathcal (R)^{np+i}(W)}\xi,\eta\rangle=[\xi,\eta]_{\mathcal R^i(X)}$$ 
  
  \end{theorem}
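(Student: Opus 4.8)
The plan is to reduce the periodic-point statement to the fixed-point lemma just proved, by passing from $\mathcal R$ to the iterate $\mathcal R^p$. First I would observe that if $X$ is a fixed point for $\mathcal R^p$, then for each $i=0,1,\dots,p-1$ the point $X^{(i)}:=\mathcal R^i(X)$ is also a fixed point for $\mathcal R^p$ (since $\mathcal R^p$ commutes with $\mathcal R$), and $\mathcal R$ cyclically permutes the $X^{(i)}$, with $\mathcal R(X^{(p-1)})=X^{(0)}=X$. So the sequence of back-iterates of $\mathcal R$ attached to the periodic point $X$ is exactly the periodic sequence $\dots,X^{(2)},X^{(1)},X^{(0)},X^{(p-1)},\dots$, and the sesquilinear form $[\,,\,]_{\mathcal R^i(X)}$ of the statement is the one associated (via proposition \ref{determines}) to starting this periodic sequence at $X^{(i)}$.

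Next I would fix a residue $i\in\{0,1,\dots,p-1\}$ and apply the previous lemma with $\mathcal R$ replaced by $\mathcal R^p$ and with the fixed point $X^{(i)}$. The hypothesis of that lemma requires a sequence in $P_4$ converging, in $\mathbb P P_4$, to $[X^{(i)}]$; I would take the sequence $n\mapsto \mathcal R^{np+i}(W)=\mathcal R^{(n-1)p}\big(\mathcal R^{p+i}(W)\big)$, whose projective class is $(\mathcal R')^{np+i}(W)$, and this tends to $(\mathcal R')^i(X)=[X^{(i)}]$ precisely by the hypothesis imposed on $W$. The lemma then supplies constants $c_{np+i}$ (indexed by $n$, for this fixed $i$) with $c_{np+i}\langle T_{\mathcal R^{np+i}(W)}\xi,\eta\rangle\to \langle T_{X^{(i)}}\xi,\eta\rangle=[\xi,\eta]_{\mathcal R^i(X)}$ for all $\xi,\eta$. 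Running over the $p$ residues $i$ and assembling the constants into a single sequence $c_m$ ($m=np+i$) yields the claimed convergence; because only finitely many residues are involved, there is no compatibility issue in gluing the constant sequences together.

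The only genuine content beyond bookkeeping is checking that the convergence estimate underlying the lemma — theorem 4.0.1 of \cite{jonogo}, that matrix coefficients of $T_{Z}$ depend continuously on $[Z]$ once one rescales into a trivialising chart — is being applied to $\mathcal R^p$ rather than $\mathcal R$; but this is harmless since $\mathcal R^p$ is again a polynomial (degree $2^p$) self-map of $P_4$, the local-triviality of the $\mathbb C^\times$-bundle $P_4\setminus\{0\}\to\mathbb P P_4$ near each $X^{(i)}$ is unchanged, and for fixed $\xi,\eta\in P_{2^k}$ the quantity $\langle T_Z\xi,\eta\rangle$ is a fixed polynomial in the coordinates of $Z$. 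Thus I would simply remark that the proof of the preceding lemma applies verbatim with $\mathcal R$ replaced by $\mathcal R^p$, fixed point $\mathcal R^i(X)$, and approximating sequence $\mathcal R^{np+i}(W)$. The main obstacle, such as it is, is purely organisational: keeping the indexing of the constants $c_m$ straight across the $p$ arithmetic progressions, and making sure the rescaling constants chosen in the $p$ distinct trivialising charts are independent of $\xi,\eta$ — which they are, since the chart (and hence $c_m$) depends only on $\mathcal R^{m}(W)$ and not on the test vectors.
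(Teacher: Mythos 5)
Your proposal is correct and follows essentially the same route as the paper: reduce the periodic case to the fixed-point lemma by working along the arithmetic progressions $np+i$, where $\mathcal R^i(X)$ is fixed for $\mathcal R^p$ and $\mathcal R^{np+i}(W)$ converges projectively to it. The paper phrases this slightly differently (run the lemma's argument once for the sequence $\mathcal R^{np}(W)$ and then push the conclusion forward by applying $\mathcal R$ $p-1$ times, rather than rerunning the lemma for each residue), but this is only a bookkeeping variation of the same idea.
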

  \begin{proof}Apply the argument of the previous lemma to the sequence $\mathcal R^{np}(W)$ then apply $\mathcal R$ $p-1$ times
  to the conclusion.
  \end{proof} 
  \begin{remark}\rm{An explicit choice of $c_n$ can generally be made by choosing $\xi=\eta=\Omega=$some unit vector
  in $P_1$ (or $P_2$ if $dim(P_1)=0$) to obtain the choice
  $$c_n=\frac{[\Omega,\Omega]_X}{\langle T_{X_n}\Omega,\Omega\rangle}$$ but it would be better to have an exact expression
  for this, at least asymptotically. (Note that the numerator is just a simple quadratic in the coefficients of $X$.}
  \end{remark} 
  
  As an immediate corollary we get the topsey-turvey momentum operators:
  \begin{theorem}Suppose there is an $X\in P_4$ so that $\mathcal R^2$ converges to $X$ on iteration starting at $\BA$.
  Then there are two sesquilinear forms $[,]_\pm$, and numbers $c_n$ and $d_n$ such that, for $\xi,\eta\in P_{2^{2k}}$,
  $$\lim_{n\rightarrow \infty} c_n\langle  \rho_{2n}(\xi),\eta\rangle=[\xi,\eta]_+$$ and
  $$\lim_{n\rightarrow \infty} c_n\langle  \rho_{2n+1}(\xi),\eta\rangle=[\xi,\eta]_-$$
  \end{theorem}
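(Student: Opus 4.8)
The plan is to get this as the case $p=2$, $W=\BA$ of the preceding theorem, after translating between the lattice rotations $\rho_n$ and the iterates of $\mathcal R$.

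First I would recall, from the result of \cite{jonogo} quoted at the head of this subsection, that for $\xi,\eta$ in a fixed finite stage one has $\langle\rho_n\xi,\eta\rangle=\langle T_{X_n}\xi,\eta\rangle$, where $X_n$ runs through the forward $\mathcal R$-orbit of $\BA$, say $X_n=\mathcal R^{n-1}(\BA)$, so that $\mathcal R(X_n)=X_{n+1}$. Consequently the even- and odd-indexed rotation sequences $n\mapsto\langle\rho_{2n}\xi,\eta\rangle$ and $n\mapsto\langle\rho_{2n+1}\xi,\eta\rangle$ agree, up to a fixed shift in the number of iterates (which merely interchanges the two parity classes, depending on one's indexing convention), with the sequences $m\mapsto\langle T_{\mathcal R^{2m}(\BA)}\xi,\eta\rangle$ and $m\mapsto\langle T_{\mathcal R^{2m+1}(\BA)}\xi,\eta\rangle$.

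Next I would unwind the hypothesis. Since $\mathcal R$ is a homogeneous quadratic, $\mathcal R^n(\BA)\to 0$ in $P_4$ --- this is precisely the mechanism of \cite{jonogo} --- so ``$\mathcal R^2$ converges to $X$ on iteration starting at $\BA$'' can only mean projective convergence, i.e. $(\mathcal R')^{2n}([\BA])\to[X]$ in $\mathbb P P_4$; by continuity of $\mathcal R'$ this also gives $(\mathcal R')^{2n+1}([\BA])\to\mathcal R'([X])$. Using the degree-$4$ homogeneity of $\mathcal R^2$ one may rescale $X$ within its projective class so that $\mathcal R^2(X)=X$ in $P_4$ (from $\mathcal R^2(X)=\lambda X$ replace $X$ by $\lambda^{-1/3}X$). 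Then $X$, and hence $\mathcal R(X)$, is a periodic point of $\mathcal R$ of period dividing $2$, so the sesquilinear forms $[,]_X$ and $[,]_{\mathcal R(X)}$ are defined as in the Generalities subsection. All hypotheses of the preceding theorem now hold with $p=2$, $W=\BA$ and this $X$, and it supplies constants $c_m$ with $c_{2n+i}\langle T_{\mathcal R^{2n+i}(\BA)}\xi,\eta\rangle\to[\xi,\eta]_{\mathcal R^i(X)}$ for $i=0,1$ and all $\xi,\eta$ in a fixed stage, in particular for $\xi,\eta\in P_{2^{2k}}$.

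Finally I would assemble the two pieces: define $[,]_+$ and $[,]_-$ to be $[,]_X$ and $[,]_{\mathcal R(X)}$ in whichever order is dictated by the parity correspondence between $\rho_n$ and the iterate count fixed in the first step, and let $c_n$, $d_n$ be the corresponding reindexed subsequences of the $c_m$ from the preceding theorem; the two displayed limits are then exactly its two cases. The only step with any content beyond invoking that theorem is the promotion of the projective limit $[X]$ to an honest fixed point of $\mathcal R$ in $P_4$ via homogeneity, together with the bookkeeping of which parity of rotation meets which of the two periodic points --- there is no analytic obstacle here.
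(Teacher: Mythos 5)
Your proposal is correct and is essentially the paper's own argument: the theorem is stated there as an immediate corollary of the preceding theorem, obtained by taking $p=2$ and $W=\BA$ together with the identification from \cite{jonogo} of $\langle\rho_n\xi,\eta\rangle$ with the transfer-matrix coefficients along the forward $\mathcal R$-orbit of $\BA$. Your explicit rescaling of the projective limit $[X]$ to an honest fixed point of $\mathcal R^2$ via the degree-$4$ homogeneity, and your handling of the parity bookkeeping, correctly fill in the details the paper leaves implicit.
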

  We will see in the next subsection that the existence of $X$ holds in every example that we examine. It might be true universally.
 \subsection{The case $\mathcal P=\mathcal Q$.}
 
 In this case $\mathcal P=\mathcal Q$, the dynamical system is a quadratic map from $\mathbb C^3$ to $\mathbb C^3$ so
 should properly be thought of as a dynamical system on $\mathbb C\mathbb P^2$.  Once
 again the braid solution is an important one which will be missed over the reals, at least for $d<3$, but in keeping with this paper we will tend to 
 think of $\mathcal R$ as a map on $\mathbb R \mathbb P^2$ which means we can fix a circle at infinity and draw pictures in the plane. 
 Beware that this is \emph{not} the plane of $\mathbb C \mathbb P^1$ much beloved of dynamical systems people. In
 particular $\mathcal R$ is not surjective. We have seen that both
 iteration and back-iteration of this dynamical system are relevant. Note that fixed points on $\mathbb C\mathbb P^2$
 and $\mathbb C^3$ are the same apart from $0$ and $\infty$ since a point in $\mathbb C^3$ that is fixed up to a scalar
 can be rescaled to an absolute fixed point.
 
Let $$X=p\BA+q\BB+r\BC=(p,q,r)$$ be an arbitrary element of $Q_4$.
 \begin{proposition}
%\begin{align} 

\begin{align*}\mathcal R(X)=(\frac{d^{2} - 5d + 7}{(d - 1)^2}p^2 + 2pq + 2\frac{d - 2}{d - 1}pr + 
 q^2 + r^2)\quad & \BA \\
 +(-\frac{1}{(d - 1)^3} p^2- \frac{1}{d-1}(2pq + q^2)) \quad&  \BB \\
 +\frac{d^2-3d+3}{(d-1)^3}p^2 + \frac{1}{d-1}(2pq + q^2))\quad & \BC   \end{align*}

 \end{proposition}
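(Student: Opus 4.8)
The plan is a direct diagrammatic computation, organised so as to minimise the number of tangle evaluations. Since $\mathcal R(X)=X!X$ and $!$ is bilinear, writing $X=p\BA+q\BB+r\BC$ gives
\begin{align*}
\mathcal R(X) &= p^2\,(\BA!\BA)+pq\,(\BA!\BB+\BB!\BA)+pr\,(\BA!\BC+\BC!\BA)\\
&\qquad +q^2\,(\BB!\BB)+qr\,(\BB!\BC+\BC!\BB)+r^2\,(\BC!\BC),
\end{align*}
so it suffices to know the six symmetric combinations $e_i!e_i$ and $e_i!e_j+e_j!e_i$ in the basis $\{\BA,\BB,\BC\}$. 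In practice I would compute the three ``pure'' products $\BA!\BA,\BB!\BB,\BC!\BC$ and obtain the three ``mixed'' ones by polarisation, e.g. $\BA!\BB+\BB!\BA=(\BA+\BB)!(\BA+\BB)-\BA!\BA-\BB!\BB$; this is six diagram evaluations in all. The shape of the claimed answer --- no $qr$ term in any component, and $r$ entering only through $pr$ and $r^2$ in the $\BA$-component --- is a convenient running check on these evaluations.

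Each product $e_i!e_j$ is, by Definition \ref{renormal}, a single closed diagram in $\mathcal Q$: one inserts $e_i$ and $e_j$ into the renormalisation tangle (the tangle obtained by capping the ABC picture with a $Y^*$), which horizontally composes $e_i$ with $e_j$ along the two ``vertical'' strands and sandwiches the result between a $\Y$ and a $\Y^*$. To bring such a diagram back to the basis one uses exactly the skein calculus of $\mathcal Q$ recalled in subsection \ref{concrete}: a closed loop is removed at the cost of a factor $d$; a trivalent vertex capped by a single strand is zero; two trivalent vertices joined along a pair of strands are removed at the cost of $\frac{1}{d-1}$ times the diagram with those vertices cut open; and $Y^*Y=\mathrm{id}$. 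Repeated application of these four moves reduces every $e_i!e_j$ to an explicit combination of $\BA,\BB,\BC$ with coefficients in $\mathbb Q(d)$. None of these reductions is conceptually hard; the diagrams are small.

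Collecting the six reduced expressions and substituting back into the displayed identity produces the three quadratic polynomials in $p,q,r$ asserted for the $\BA$-, $\BB$- and $\BC$-components. Two further consistency checks are available. First, $\mathcal R(\BA)=\BA!\BA$ must reproduce the first step of the rotation-correlation recursion of \cite{jonogo}, which begins precisely by iterating $\mathcal R$ at $\BA$. Second, $\mathcal R$ must fix (at least projectively) the crossing, since the braid solution $A=B=C={}$crossing of the ABC equation satisfies $(\mathrm{crossing})!(\mathrm{crossing})=\mathrm{crossing}$; evaluating the formula on the crossing, written in the $\BA,\BB,\BC$ basis, should therefore return a scalar multiple of the crossing.

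The only genuine obstacle is bookkeeping: the six evaluations together involve a fair number of loop removals and bigon reductions, and one must track the accumulating signs and powers of $(d-1)$ carefully. There is no analytic or structural subtlety, and the structural features of the target (vanishing of the $qr$ coefficient and of various others) make slips easy to detect.
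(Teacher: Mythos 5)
Your proposal is correct and is essentially the paper's own argument: the paper's proof consists of the single sentence that the formula ``is just a calculation using the skein theory of $\mathcal Q$,'' which is exactly the direct diagrammatic evaluation you describe. Your organisation via bilinearity of $!$ and polarisation, together with the consistency checks against the fixed-point behaviour of $\BA$ and the crossing, is a sound (and more explicit) way to carry out that same calculation.
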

 \begin{proof}
 This is just a calculation using the skein theory of $\mathcal Q$.\end{proof}
 
 We want to understand the dynamical systems $\mathcal R$ for different values of $d$. At this stage there is
 insufficient justification for doing this for all values of $d$ so we will content ourselves with presenting 
 representative  results for 4 different regimes of behaviour: $d= 2, 2<d<3, d=3$ and $d>3$. For a value chosen
 in each of these cases we give a portrait of the significant points in $\mathbb R\mathbb P^2$ with values rounded off 
 to a few decimal places. We have chosen
 to send the circle $p=0$ to infinity since this makes $\BA$ the origin in the portrait.
 
 \begin {enumerate}
 \item $d=$ golden ratio, $\frac{1+\sqrt 5}{2}$.
 
 This case was not covered in \cite{jonogo} though it is actually easier than anything there. In this case $Q_4$ has dimension
 equal to $2$ and is spanned by $\BB$ and $\BC$ (see \cite{MPS}). The relation $\BA=\BB-\frac{1}{d} \BC$ holds. It is extremely
 easy to calculate $$\mathcal R (q\BB+r\BC)=(r^2-\frac{q^2}{d})\BA +(q^2-\frac{r^2}{2})\BB$$.
 One checks immediately that $\langle \rho_n\xi,\eta\rangle\rightarrow 0$ very rapidly as $n\rightarrow \infty$ just as in \cite{jonogo}.
Since $dim(Q_4)=2$,  the projective version of $\mathcal R$ is actually a rational dynamical system on $\mathbb C\mathbb P^1$
but it is rather boring-the Julia set is the unit circle $\mathcal R$ interchanges the inside and outside. It has an attracting orbit
of period two to which all points not on the unit circle converge under iteration.
 \item $d=2$ This case does not merit a portrait since the entire plane is mapped to the single line $r=-q$ and
 $\BA$ is a fixed point for $\mathcal R^2$ so that the sesquilinear forms $[,]_\pm$ are just given by $[,]_{\BA}$ and
 $[,]_{\mathcal R(\BA)}$. The only fixed points for $\mathcal R'$ are the braid crossings and the real point $(-1/2,1/2)$.
 
 \item $d=1+\sqrt 2$\\
 
 Portrait: \\
 \vpic {d24} {4in}
 
 \5 Legend:  \\
 \vpic {redcross} {0.15in}  The element \BA of $P_4$  $=(0,0)$ \\
 \vpic {greencross} {0.15in} The element \BD of $P_4$  $=(-0.707,1)$ \\
 \vpic {yellowdot} {0.1in}  \vpic {bluedot} {0.1in} Stable points of period 2. $(-0.825,1.022)$ and $(0.315,-0.118)$\\
 \vpic {purpledot} {0.1in}  All the real fixed points for $\mathcal R$. $(-1.557,-0.850)$, repelling, $(-1.332,0.332)$,repelling,\\
 $(-0.61,0.098)$, unstable, 
 $(-0.375,-0.625)$, repelling and $(-0.186,0.521)$, unstable.\\
 \vpic {iteratesto} {0.6in}   Iterates to. \\
 \vpic {backiteratesto} {0.6in}  Backiterates to.

 Missing are two complex fixed points, the positive and negative braid crossings.
 
 \item $d=3$\\
 
 Portrait: \\
 \vpic {d3} {4in}
 
 \5 Legend:  \\
 \vpic {redcross} {0.15in}  The element \BA of $P_4$  $=(0,0)$ \\
 \vpic {greencross} {0.15in} The element \BD of $P_4$  $=(-0.5,0.5)$ \\
 \vpic {yellowdot} {0.1in}  \vpic {bluedot} {0.1in} Stable points of period 2. $(-0.5,0.781)$ and $(0.281,0)$\\
 \vpic {purpledot} {0.1in}  All the real fixed points for $\mathcal R$. $(-1.675,-1.175)$, repelling, $(-1.309,0.309)$,repelling,\\
 $(-0.5,0)$, neutral-this is the braid solution, 
 $(-0.191,-0.809)$, repelling and $(-0.075,0.425)$, unstable.\\
 \vpic {iteratesto} {0.6in}   Iterates to. \\
 \vpic {backiteratesto} {0.6in}  Backiterates to.
 
 No complex fixed points this time. The braid solution is symmetric and real.
 
\item For $d$ between $3$ and just under 3.52783, the braid solution at $d=3$ splits into three fixed points, two of
 which are the braid solutions, but apart from this the picture looks much like at $d=3$. 
 
 \item Something more interesting
 happens at 3.52783. The two periodic points that are the limits of $\BA$ and $\BD$ under iteration coalesce. \\
 Here is the portrait for large $d$:\\
 
 $d=26.04$\\
 
 \vpic {d2604} {3.5in}
 
 \5 Legend:  \\
 \vpic {redcross} {0.15in}  The element \BA of $P_4$  $=(0,0)$ \\
 \vpic {greencross} {0.15in} The element \BD of $P_4$  $=(-0.0399,0.0399)$ \\
 \vpic {yellowdot} {0.1in}  Unique attracting fixed point.  Limit of  \vpic {redcross} {0.15in} and 
  \vpic {greencross} {0.15in} under iteration.\\
 \vpic {purpledot} {0.1in}  All the real fixed points for $\mathcal R$\\
 \vpic {backiteratesto} {0.6in}  Backiterates to.

 \end{enumerate}
 \begin{remark}\rm{ The fixed and periodic points other than the limits under iteration and backiteration do
 not yet have any interest for the scale invariant physics but they might be of interest for topology as
 they provide coefficient-like functions on the Thompson group.}
\end{remark}
 
 \section{Weakly scale invariant nearest neighbour Hamiltonians.}
 By a nearest neighbour Hamiltonian we mean an operator of the form:
 $$\sum_{i=1}^n id\otimes id\otimes \cdots \otimes h_i \otimes \cdots \otimes id$$
 where $h_i$ are self-adjoint linear maps on $\mathcal h\otimes \mathcal h$  and $h_i$ comes after
 the $(i-1)$th. tensor product symbol, with periodic boundary conditions. It is spatially homogeneous if $h_i$ is independent of $i$.
 We will call this map $\mathfrak H_n$, being deliberately ambiguous about the tensor power of $\mathcal H$ on which it acts.
 (The extension to general planar algebras is obvious.)
 
 The analogue of the ABC equation has no solutions for $\mathcal Q$, spatially inhomogeneous
 or not, but if we only require weak scale invariance there are solutions. We restrict our attention to
 the spatially homogeneous case. We will see that the equation only invokes a \emph{linear} 
 renormalisation so is very easy to solve.
 
 \begin{definition} The \emph{scale invariance} map $\mathcal S: End(\mathcal h\otimes \mathcal h)\rightarrow End(\mathcal h\otimes \mathcal h)$
 is the map $$\mathcal S(h) =\vpic {scaleinvarianthameqn1} {1in}\qquad + \qquad \vpic {scaleinvarianthameq2} {0.6in} $$
 
 \end{definition}
We assume the notation of section 2.2 with the direct system $t \mapsto  A(t)=
\otimes^n \mathcal h$, ($t$ being a tree with $n$ leaves)  $Y$ being used to define the embedding maps $\iota_s^t$ for $s\leq t$.

 We will content ourselves to use the scale invariance map to construct nearest neighbour Hamiltonians on
 for the cofinal sequence $\mathcal T_n$ for which the vector space is $A(\mathcal T_n)=\otimes^{2^n} \mathcal h$, leaving it to the reader
 to sort out the restrictions to an arbitrary $A(t)$.
 
 Recall that for $s\leq t \in \mathfrak T$, the inclusion 
 \begin{theorem} Given $h\in End(\mathcal h \otimes \mathcal h)$ consider $\mathfrak H_h$ on the Hilbert space $A(\mathcal T_n)$. Then for 
 $\xi, \eta \in A(\mathcal T_n)$, 
 $$< \mathfrak H_{h}(\iota_{\mathcal T_n}^{\mathcal T_{n+1}}(\xi)),\iota_{\mathcal T_n}^{\mathcal T_{n+1}}(\eta)>=<\mathfrak H_{\mathcal S(h)}\xi,\eta>$$ so that  $\langle\mathfrak H_h \xi,\eta\rangle$ on $\otimes^{2^n} \mathcal h$ extends to a sesquilinear form on $\mathcal H$ provided 
 $\mathcal S ^{-n}(\{h\}) \neq \emptyset$ for all $k$.
 \end{theorem}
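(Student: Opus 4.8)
The plan is to verify the key matrix-coefficient identity
$$\langle \mathfrak H_{h}(\iota_{\mathcal T_n}^{\mathcal T_{n+1}}(\xi)),\iota_{\mathcal T_n}^{\mathcal T_{n+1}}(\eta)\rangle=\langle\mathfrak H_{\mathcal S(h)}\xi,\eta\rangle$$
purely diagrammatically, exactly as in the proof of Proposition \ref{scaleinvariance}, and then to deduce the statement about the sesquilinear form from Proposition \ref{determines}(2). First I would recall that $\iota_{\mathcal T_n}^{\mathcal T_{n+1}}$ is implemented by attaching a $\Y$ to every one of the $2^n$ strands, and that $\mathfrak H_h=\sum_i id\otimes\cdots\otimes h_i\otimes\cdots\otimes id$ with periodic boundary conditions. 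Taking the inner product against $\iota_{\mathcal T_n}^{\mathcal T_{n+1}}(\eta)$ means attaching $\Y^*$ to every strand on the top as well, so the left-hand side becomes $\sum_i$ of a diagram in which every strand except the two adjacent strands $i,i+1$ has a $\Y$ capped by a $\Y^*$, which collapses to the identity by the isometry condition $Y^*Y=id$; on the two special strands one is left with $\Y^*\otimes\Y^*$ followed by $h$ followed by $\Y\otimes\Y$ — that is precisely the first term of $\mathcal S(h)$. The crucial combinatorial point is bookkeeping the boundary term: when the doubled pair straddles the "seam" where two distinct $\Y$'s meet across a single original strand, one gets instead the second pictured term of $\mathcal S(h)$ (a single $\Y$ split between the two copies). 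Collecting all $2^{n+1}$ terms of $\mathfrak H_h$ on the doubled chain and reorganizing them by which original site or which seam they sit over yields exactly the $2^n$ terms of $\mathfrak H_{\mathcal S(h)}$, proving the identity.

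Given the identity, the rest is formal. Setting $L_w = \mathcal S^{-\mathrm{length}(w)}(h)$ along a compatible back-orbit makes the family $\xi\mapsto\langle \mathfrak H_{h_n}\xi,\eta\rangle$ consistent with the inclusions $\iota_{\mathcal T_n}^{\mathcal T_{n+1}}$, so by Proposition \ref{determines}(2) (or the Remark following it) these forms glue to a single sesquilinear form $[\,,\,]$ on the algebraic direct limit $\mathcal H$. The hypothesis "$\mathcal S^{-n}(\{h\})\neq\emptyset$ for all $n$" is exactly what is needed to choose such a back-orbit: for each $\xi,\eta\in A(\mathcal T_k)$ one needs $h$-values defined at all scales $n\ge k$, obtained by solving $\mathcal S(h_{n+1})=h_n$ repeatedly starting from $h_k=h$. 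Since $\mathcal S$ is a \emph{linear} map on the finite-dimensional space $End(\mathcal h\otimes\mathcal h)$, the nonemptiness of $\mathcal S^{-n}(\{h\})$ for all $n$ is a genuine (and checkable) linear-algebra condition — e.g. it holds whenever $h$ lies in $\bigcap_n \mathrm{im}(\mathcal S^n)$, in particular whenever $\mathcal S$ is invertible or $h$ is an eigenvector of $\mathcal S$ with nonzero eigenvalue.

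I expect the only real obstacle to be the seam/boundary accounting in the diagrammatic step: one must be careful that, under periodic boundary conditions, the $2^{n+1}$ nearest-neighbour terms downstairs partition cleanly into the "over a site" terms and the "over a seam" terms upstairs, with no term counted twice and none omitted, and that the isometry relation $Y^*Y=id$ is applied to exactly the right strands in each case. This is routine but is the step where a sign or a stray loop parameter could sneak in, so I would draw the $n=1\to 2$ case ($2$ strands doubling to $4$) in full as the base sanity check before asserting the general pattern. Everything else — linearity of $\mathcal S$, the passage to the direct limit, and the interpretation of the back-orbit condition — is immediate from the material already developed, in particular Propositions \ref{determines} and \ref{scaleinvariance} and the definition of the map $\mathcal S$.
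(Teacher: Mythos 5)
Your proposal is correct and follows essentially the same route as the paper: split the $2^{n+1}$ nearest-neighbour terms of $\mathfrak H_h$ on the doubled chain according to whether $h$ sits on the two outputs of a single $Y$ or straddles the seam between two adjacent $Y$'s, collapse every untouched $Y$ via $Y^*Y=id$, and regroup (using periodicity) into the $2^n$ terms of $\mathfrak H_{\mathcal S(h)}$, after which the back-orbit condition $\mathcal S^{-n}(\{h\})\neq\emptyset$ supplies the consistent family of forms exactly as you describe. The one slip is that your verbal descriptions of the two cases are interchanged --- the pair lying over a single original site yields the one-$Y$ term $Y^*hY$ (tensored with an identity strand), while the seam pair yields $(Y^*\otimes Y^*)(1\otimes h\otimes 1)(Y\otimes Y)$ --- but this relabelling does not affect the argument, and your planned check of the $n=1\to 2$ case would catch it.
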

 \begin{proof} In evaluating $< \mathfrak H_{h}(\iota_{\mathcal T_n}^{\mathcal T_{n+1}}(\xi)),\iota_{\mathcal T_n}^{\mathcal T_{n+1}}(\eta)>$ there are two kinds of terms according to the parity of $i$ in the sum defining $\mathfrak H_{h}$. In one kind $h$ is surrounded by \vpic {scaleinvarianthameqn1} {0.7in} and in the other kind by \vpic {scaleinvarianthameq2} {0.5in}. Grouping the terms together and
 using periodic boundary conditions one obtains the result. 
 \end{proof}
 
 Thus the eigenvectors of $\mathcal S$ can be used to construct very scale-invariant nearest neighbour hamiltonians.
 
 Finally we calculate $\mathcal S:Q_4\rightarrow Q_4$ in our example to make sure it is generic enough for our usual
 linear algebra intuition to hold.
 
 \begin{proposition}  The matrix of $\mathcal S$ with respect to the basis $\{\vpic {h} {0.3in} ,\vpic {cupcap} {0.3in} ,\vpic {idv} {0.3in} \}$
 of $Q_4$ is 
 $$\begin{pmatrix}
 \frac{(d-2)^2}{(d-1)^2}&\frac{d-3}{d-1}&0\\
 0&\frac{1}{(d-1)^2}&0\\
 \frac{d-2}{d-1}&\frac{d-2}{(d-1)^2}&2
 \end{pmatrix} $$
 
  \end{proposition}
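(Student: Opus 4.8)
The plan is to compute the matrix of $\mathcal S$ directly from its diagrammatic definition, using the skein relations of $\mathcal Q$ displayed in subsection \ref{concrete}. First I would evaluate $\mathcal S$ on each of the three basis elements $\vpic {h} {0.3in}$, $\vpic {cupcap} {0.3in}$, $\vpic {idv} {0.3in}$ in turn. For a given basis element $h$, the map $\mathcal S(h)$ is the sum of two tensor-network diagrams: in the first, $h$ sits inside a configuration built from two copies of $Y$ on the bottom and two copies of $Y^*$ on the top (the picture $\vpic {scaleinvarianthameqn1} {1in}$), and in the second it sits inside the ``shifted'' configuration $\vpic {scaleinvarianthameq2} {0.6in}$ coming from the opposite parity of lattice site. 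Each of these is a closed tangle in $\mathcal Q$ with one input $4$-box slot, so resolving all the $Y$'s via the relations $\vpic{skein1} {0.1in} = 0$, $\vpic{skein2} {0.5in} = \frac{1}{d-1}(\vpic{skein3} {0.5in})$, and $Y^*Y = \mathrm{id}$, together with removal of closed loops (each contributing a factor $d$), reduces $\mathcal S(h)$ to an explicit linear combination of the three basis tangles.

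Concretely, I would first record the expansions of the elementary ``bigon'' and ``triangle'' pieces that appear when two trivalent vertices are joined along one or two strings: the relation $\vpic{skein2} {0.5in} = \frac{1}{d-1}(\vpic{skein3} {0.5in})$ handles the two-string case, and $\vpic{skein1} {0.1in}=0$ kills one of the resulting terms, while the one-string join is just a new trivalent vertex. Applying these repeatedly to each of the six diagrams (two parities $\times$ three basis elements) collapses every $Y$, leaving scalar multiples of $\vpic {h} {0.3in}$, $\vpic {cupcap} {0.3in}$, and $\vpic {idv} {0.3in}$; the scalars are rational functions of $d$ with denominators powers of $d-1$, exactly matching the claimed matrix. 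As a consistency check I would verify the third column: $\mathcal S$ applied to $\vpic {idv} {0.3in}$ should split as $\mathrm{id}$ going straight through in one parity and, in the other parity, two copies of $Y^*Y=\mathrm{id}$ appearing with the cups/caps, which should produce precisely $\frac{d-2}{d-1}\vpic {h} {0.3in} + \frac{d-2}{(d-1)^2}\vpic {cupcap} {0.3in} + 2\,\vpic {idv} {0.3in}$; the leading $2$ reflects the two identity contributions, one per parity.

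The main obstacle is purely bookkeeping: keeping track of which strings get capped off (contributing loop factors of $d$) versus which get fused (contributing factors $\frac{1}{d-1}$) as the $Y$'s are resolved, and in particular getting the $\vpic {cupcap} {0.3in}$-column right, since the entry $\frac{1}{(d-1)^2}$ in the middle together with the $0$'s above and below it asserts that the cup-cap element is almost an eigenvector — this is a delicate cancellation that must be checked carefully rather than asserted. Since there are no conceptual subtleties beyond the skein calculus already set up, I would simply say the computation is routine given the relations of $\mathcal Q$, present the resulting matrix, and leave the string-by-string verification to the reader, as is done for the analogous computations of $\mathcal R$ and $\alpha$ earlier in the paper.
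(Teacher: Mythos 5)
Your overall strategy---evaluate $\mathcal S$ on each of the three basis diagrams by resolving the two constituent tangles with the skein relations of $\mathcal Q$ and collecting coefficients---is exactly what the paper does; its entire proof is the one-line ``just calculate using the skein relations of $\mathcal Q$''. So the method is not in question. The problem is that the only quantitative content of your proposal, the ``consistency check'', is inconsistent with the statement you are proving: you are reading the \emph{rows} of the matrix as the images of the basis vectors, whereas the stated matrix (and the eigenvector list that follows it in the paper) uses the standard convention that the \emph{columns} are the images. The third column is $(0,0,2)^{T}$, i.e.\ $\mathcal S(\mathrm{id})=2\,\mathrm{id}$ exactly---which is immediate from $Y^*Y=\mathrm{id}$, since both diagrams in the definition of $\mathcal S$ collapse to the identity when $h=\mathrm{id}$---and \emph{not} the combination $\frac{d-2}{d-1}\,\vpic {h} {0.3in}+\frac{d-2}{(d-1)^2}\,\vpic {cupcap} {0.3in}+2\,\vpic {idv} {0.3in}$ that you predict, which is the third row. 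Had you carried out the check you describe, you would have obtained $2\,\mathrm{id}$ and wrongly concluded that the proposition is false.

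The same confusion infects your remark about the ``delicate cancellation'': the pattern $0,\ \tfrac{1}{(d-1)^2},\ 0$ is the second \emph{row}, and under the correct convention it says that neither $\vpic {h} {0.3in}$ nor $\vpic {idv} {0.3in}$ acquires any cup--cap component under $\mathcal S$. The cup--cap element is \emph{not} close to an eigenvector: its image is the full second column $\bigl(\frac{d-3}{d-1},\ \frac{1}{(d-1)^2},\ \frac{d-2}{(d-1)^2}\bigr)^{T}$, which is why the paper's eigenvector for the eigenvalue $\frac{1}{(d-1)^2}$ is the three-term combination it records rather than the cup--cap alone. Once the convention is straightened out, the computation you outline is the right one and coincides with the paper's; as written, the checks you propose would fail and should be redone against the columns.
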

  \begin{proof} Just calculate using the skein relations of $\mathcal Q$.
  \end{proof}
  
  The eigenvalues of $\mathcal S$ are thus $\displaystyle 2, \frac{(d-2)^2}{(d-1)^2}$ and $\displaystyle \frac{1}{(d-1)^2}$ with
  eigenvectors $$\vpic {idv} {0.3in} \quad,$$  $$(d^2-2) \vpic {h} {0.3in} +(d-1)(d-2) \vpic {idv} {0.3in} \quad , \mbox {  and  }$$
  $$  -(2d^2-4d+1) \vpic {h} {0.3in} +(2d^2-4d+1) \vpic {cupcap} {0.3in} +(d-2)^2 \vpic {idv} {0.3in} $$
  respectively.
  
  So provided $d\neq 2$ $\mathcal S$ is invertible. 
  
  The eigenvalue $2$ is of no interest since the eigenvector is just the identity
  and we would get the identity Hamiltonian. 
  
  For $d< 3$ the largest eigenvalue of $\mathcal S^{-1}$ is $\frac{(d-2)^2}{(d-1)^2}$ so any scale invariant spatially homogeneous nearest neighbour Hamiltonian will tend, modulo scalars, to that with $h=(d^2-2) \vpic {h} {0.3in} +(d-1)(d-2) \vpic {idv} {0.3in}$.
  
  And for $d>3$ the same limit Hamiltonian will have $h=  -(2d^2-4d+1) \vpic {h} {0.3in} +(2d^2-4d+1) \vpic {cupcap} {0.3in} +(d-2)^2 \vpic {idv} {0.3in} $.
  
  For both of these possibilities for $h$, as elements of the algebra $Q_4$, their eigenvalues are both positive and negative. 
  One can force the spectrum of the Hamiltonian to be positive on a given $Q_n$ by adding a large multiple of the identity, but
  on applying $\mathcal S^{-1}$ enough times, as required by scale invariance, the spectrum will eventually contain negative
  numbers as well as positive. 
   
   Thus, not surprisingly for topsey turvey systems, the spectrum of a scale invariant Hamiltonian can only be positive
   for the trivial Hamiltonian.
 %Hidden note-find calculation of spectrum of eigenvectors of S in Hamiltonianrenormalisation.nb

\end{document}